\documentclass{amsart}
\usepackage[margin=1in]{geometry}
\usepackage{amscd}
\usepackage{amssymb}
\usepackage{amsmath}
\usepackage{amsthm}
\usepackage{enumerate}
\usepackage{tikz}
\usepackage{wrapfig, framed, caption}
\usepackage{float}
\usetikzlibrary{arrows}
\usepackage[font=small,labelfont=bf]{caption}
\usepackage{xcolor}
\usepackage{mathtools}
\usepackage[colorlinks=true, linkcolor=blue, citecolor=blue,
pagebackref=true]{hyperref}
\usepackage{fouriernc}
\usepackage{ulem}

\usepackage{comment}

\normalem
\parskip=5pt

\newtheorem{theorem}{Theorem}[section]
\newtheorem*{theorem*}{Theorem}

\newtheorem*{theoremY*}{Theorem Y}

\newtheorem*{theoremAB*}{Theorem AB}

\newtheorem*{linearformsmtp*}{Mass transference principle for linear forms}
\newtheorem{corollary}{Corollary}[section]
\newtheorem*{corollary*}{Corollary}

\newtheorem{lemma}{Lemma}[section]

\newtheorem*{claim*}{Claim}

\newtheorem{question}{Question}
\theoremstyle{definition}

\theoremstyle{remark}
\newtheorem{remark}{Remark}
\newtheorem*{remark*}{Remark}

\newcommand{\bp}{\mathbf{p}}

\renewcommand{\Bbb}[1]{\mathbb{#1}}



\newcommand{\N}{{\Bbb N}}         

\newcommand{\R}{{\Bbb R}}        

\newcommand{\Z}{{\Bbb Z}}         


\newcommand{\cB}{{\mathcal B}}

\newcommand{\cH}{{\mathcal H}}

\newcommand{\cK}{{\mathcal K}}

\newcommand{\cU}{{\mathcal U}}



\newcommand{\x}{\mathbf{x}}

\newcommand{\br}{\mathbf{r}}



\DeclareMathOperator{\dimh}{\dim_H}

\newcommand{\bx}{\mathbf{x}}
\newcommand{\by}{\mathbf{y}}

\newcommand{\bgam}{\mathbf{\gamma}}
\newcommand{\bfa}{\mathbf{a}}
\newcommand{\bft}{\mathbf{t}}



\title{A note on limsup sets of annuli}

\author[M. Hussain]{Mumtaz Hussain}
\address{Mumtaz Hussain,  Department of Mathematical and Physical Sciences,  La Trobe University, Bendigo, Australia. }
\email{m.hussain@latrobe.edu.au}

\author[B. Ward]{ Benjamin Ward}
\address{Benjamin Ward,  University of York, Heslington, York, YO10 5DD, United Kingdom. }
\email{benjamin.ward@york.ac.uk, ward.ben1994@gmail.com}

\begin{document}

\begin{abstract}
   We consider the set of points in infinitely many max-norm annuli centred at rational points in $\R^{n}$. We give Jarn\'ik-Besicovitch type theorems for this set in terms of Hausdorff dimension. Interestingly, we find that if the outer radii are decreasing sufficiently slowly, dependent only on the dimension $n$, and the thickness of the annuli is decreasing rapidly then the dimension of the set tends towards $n-1$. We also consider various other forms of annuli including rectangular annuli and quasi-annuli described by the difference between balls of two different norms. Our results are deduced through a novel combination of a version of Cassel's Scaling Lemma and a generalisation of the Mass Transference Principle, namely the Mass transference principle from rectangles to rectangles due to Wang and Wu (Math. Ann. 2021).
\end{abstract}
\maketitle

\section{Introduction}
A fundamental set in Diophantine approximation is the set of $\psi$-approximable points. Throughout let $\psi:\N\to \R_{+}$ be a monotonically decreasing function such that $\psi(q)\to 0$ as $q \to \infty$. Define the set of $\psi$-approximable points to be
\begin{equation*}
    W_{n}(\psi):=\left\{ \bx \in [0,1]^{n}: \left\|\bx-\tfrac{\bp}{q}\right\|<\tfrac{\psi(q)}{q} \quad \text{ for i.m. } \, (\bp,q)\in\Z^{n}\times \N \right\}\, ,
\end{equation*}
where $\|\cdot\|$ denotes the maximum norm on $\R^{n}$ and `i.m.' denotes `infinitely many'. It is well known that $W_{n}(\psi)$ can be written as a $\limsup$ set of balls centred at rational points $\tfrac{\bp}{q}$ and with radii $\tfrac{\psi(q)}{q}$. Much is known on the set $W_{n}(\psi)$, for example Jarn\'ik \cite{J28} and Besicovitch \cite{B34} independently proved that, for $\psi(q)=q^{-\tau_{\psi}}$ with $\tau \geq \tfrac{1}{n}$,
\begin{equation*}
    \dimh W_{n}(\psi) = \frac{n+1}{1+\tau_{\psi}}\, .
\end{equation*}
Here $\dimh$ denotes the Hausdorff dimension, see \cite{F14} for the definition and properties of Hausdorff dimension and measure. We consider the following variation of $W_{n}(\psi)$. Let $\phi:\N\to [0,1]$ and define the set of points
\begin{equation*}
    W_{n}(\psi,\phi):=\left\{ \bx \in [0,1]^{n}: (1-\phi(q))\frac{\psi(q)}{q} < \left\|\bx -\frac{\bp}{q}\right\|<\frac{\psi(q)}{q} \, \text{ for i.m. } \, (\bp,q)\in\Z^{n}\times \N \right\}\, .
\end{equation*}
 It is straightforward to see that $W_{n}(\psi,\phi)\subset W_{n}(\psi)$. In parallel to the set of $\psi$-approximable points, it can be seen that $W_{n}(\psi,\phi)$ can be written as a $\limsup$ set of annuli, or $n$-dimensional hyperspherical shells. Generally, we refer to these types of sets as quasi-annuli. Denoting by $A(\bx;r_{1},r_{2}):=B(\bx,r_{1})\backslash B(\bx,r_{2})$ the annulus with centre $\bx\in\R^{n}$ and inner and outer radii $r_{2},r_{1}$ respectively, we may write
\begin{equation*}
    W_{n}(\psi,\phi)=\limsup_{q\to \infty} \bigcup_{0\leq \|\bp\|\leq q}A\left(\frac{\bp}{q};\frac{\psi(q)}{q},(1-\phi(q))\frac{\psi(q)}{q} \right).
\end{equation*}
In this note, we prove the following Jarn\'ik-Besicovitch type theorem in the setting of approximation by annuli.
\begin{theorem} \label{dim n>1}
    Take $n\in \N$ and let $\psi(q)=q^{-\tau_{\psi}}$ and $\phi(q)=q^{-\tau_{\phi}}$ for $\tau_{\psi},\tau_{\phi}\in\R_{+}$ with $\tau_{\psi}\geq \frac{1}{n}$. Then
    \begin{equation*}
        \dimh W_{n}(\psi,\phi)=\min\left\{\frac{n+1}{1+\tau_{\psi}},\frac{n+1+(n-1)\tau_{\phi}}{1+\tau_{\psi}+\tau_{\phi}} \right\}\, .
    \end{equation*}
\end{theorem}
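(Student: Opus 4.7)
The plan is to establish matching upper and lower bounds; the minimum in the stated formula will emerge as the crossover between two competing covering strategies for the annuli.

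For the upper bound I would combine two observations. First, $W_n(\psi,\phi) \subseteq W_n(\psi)$, so the classical Jarn\'ik--Besicovitch theorem quoted in the introduction immediately yields $\dimh W_n(\psi,\phi) \leq (n+1)/(1+\tau_\psi)$. Second, since each max-norm annulus $A(\mathbf{p}/q; \psi(q)/q, (1-\phi(q))\psi(q)/q)$ is a cubical shell of outer radius $\psi(q)/q$ and thickness $\delta(q) := \phi(q)\psi(q)/q$, it admits a natural cover by $\asymp q^{(n-1)\tau_\phi}$ axis-aligned cubes of sidelength $\delta(q)$. A Hausdorff--Cantelli calculation then shows that
\[
\sum_{q=1}^{\infty} q^n \cdot q^{(n-1)\tau_\phi} \cdot \delta(q)^{s}
\]
converges precisely when $s > (n+1+(n-1)\tau_\phi)/(1+\tau_\psi+\tau_\phi)$, delivering the second upper bound.

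For the lower bound I would apply the Wang--Wu mass transference principle from rectangles to rectangles. Inside each annulus I would single out the rectangular slab
\[
R_{q,\mathbf{p}} := \Bigl\{\mathbf{x} : (1-\phi(q))\tfrac{\psi(q)}{q} < x_n - \tfrac{p_n}{q} < \tfrac{\psi(q)}{q}, \ \bigl|x_i - \tfrac{p_i}{q}\bigr| < \tfrac{\psi(q)}{q} \text{ for all } i \neq n \Bigr\},
\]
which lies entirely in the annulus and has dimensions $\delta(q) \times (2\psi(q)/q)^{n-1}$. Taking the ambient ubiquitous family to be the Dirichlet balls $\{B(\mathbf{p}/q, 1/q)\}$ (whose limsup is all of $[0,1]^n$), the slab $R_{q,\mathbf{p}}$ has sidelength exponents $a_1 = \cdots = a_{n-1} = 1+\tau_\psi$ along its long axes and $a_n = 1+\tau_\psi+\tau_\phi$ along its short axis, both measured against the ambient radius $1/q$. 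The Wang--Wu formula then provides the lower bound
\[
\dimh \limsup R_{q,\mathbf{p}} \ \geq \ \min_{k\in\{1,\dots,n\}} \frac{n+1+\sum_{j:\,a_j<a_k}(a_k-a_j)}{a_k};
\]
evaluating this at $a_k = 1+\tau_\psi$ and at $a_k = 1+\tau_\psi+\tau_\phi$ reproduces exactly the two expressions appearing in the statement.

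The main obstacle, as I see it, is that $R_{q,\mathbf{p}}$ is offset from the rational centre $\mathbf{p}/q$ by a translation of order $\psi(q)/q$ along a coordinate axis, so standard formulations of MTP in which the rectangle and its ambient ball share a centre cannot be applied directly. This is where I expect the Cassels-type scaling lemma advertised in the abstract to enter: by rescaling the Diophantine problem, one can replace the shifted slab centres by a genuine rational system of comparable denominator and count $\asymp q^n$, thereby restoring the ubiquity hypothesis required by Wang--Wu while preserving the sidelength exponents $a_j$. Verifying that this reparametrisation leaves both the weight $q^n$ and the exponents $a_j$ intact is the delicate technical heart of the lower bound.
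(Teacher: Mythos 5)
Your upper bound is correct, and in fact cleaner than the paper's: you split it into the trivial inclusion $W_n(\psi,\phi)\subseteq W_n(\psi)$ (giving $(n+1)/(1+\tau_\psi)$) plus a direct cover of each cubical shell by $\asymp q^{(n-1)\tau_\phi}$ cubes of sidelength $\phi(q)\psi(q)/q$; the Hausdorff--Cantelli sum does indeed converge precisely above $(n+1+(n-1)\tau_\phi)/(1+\tau_\psi+\tau_\phi)$. The paper instead decomposes the annulus into $2n$ off-centre slabs and then optimises the cover over several ball radii, but in the equal-weight case this collapses to exactly your two competing covers.

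Your lower bound has the right skeleton (slab inside the shell, Wang--Wu from rectangles to rectangles, a Cassels-type lemma to absorb the shift), and the final formula you quote is correct. But there is a genuine gap in the middle: the ambient family you feed into Wang--Wu is too coarse. You take the ``trivial'' balls $B(\mathbf{p}/q, q^{-1})$, i.e.\ $\mathbf{a}=(1,\ldots,1)$ and $\mathbf{t}=(\tau_\psi,\ldots,\tau_\psi,\tau_\psi+\tau_\phi)$. Running the Wang--Wu formula with that choice at $A_i=1+\tau_\psi$ (so $\cK_1=\emptyset$, $\cK_2=\{1,\ldots,n-1\}$, $\cK_3=\{n\}$) yields
\begin{equation*}
(n-1) + \frac{1-(n-1)\tau_\psi}{1+\tau_\psi} \;=\; \frac{n}{1+\tau_\psi},
\end{equation*}
which falls short of the target $(n+1)/(1+\tau_\psi)$ by one in the numerator. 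The display you wrote down (the Rynne-type expression with $n+1$ in the numerator) does not follow from the Wang--Wu theorem with $\mathbf{a}=(1,\ldots,1)$; it follows only when the ambient rectangles are chosen at the Minkowski/Dirichlet scale, i.e.\ $\mathbf{a}=(1+\tfrac{1}{n},\ldots,1+\tfrac{1}{n})$ (equivalently $b_i=\tfrac1n$, $\sum b_i=1$), which is exactly what the paper does via Minkowski's theorem for linear forms. Replacing your ambient with the Dirichlet balls $B(\mathbf{p}/q, q^{-(1+1/n)})$ and recomputing with $\mathbf{t}=(\tau_\psi-\tfrac1n,\ldots,\tau_\psi-\tfrac1n,\tau_\psi+\tau_\phi-\tfrac1n)$ repairs the count and recovers $(n+1)/(1+\tau_\psi)$.

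Separately, your description of what the Cassels scaling lemma does is slightly off. The paper does not ``replace the shifted slab centres by a genuine rational system of comparable denominator''; nothing is reparametrised. The lemma (and the paper's Lemma 3.5/Theorem 3.6 built on it) is purely measure-theoretic: if each member of a sequence of rectangles is shifted by an amount bounded by a constant times its own sidelengths, the limsup set has the same $\mu$-measure as the unshifted one. This is what lets the full-measure hypothesis of Wang--Wu be transported from the centred Minkowski rectangles to the off-centre slabs, after which Wang--Wu is applied verbatim. So the shift is handled at the level of the full-measure hypothesis, not by redefining the index set.
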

The set $W_{n}(\psi,\phi)$ can be related to sets of exact approximation order. These have recently gained significant interest, in chronogloical order see \cite{BDV01,Bu03,Bu08,BM11,Fraser23,Fregoli2023, Schl23, KLWZ,BS24}. Define
\begin{equation*}
    Exact_{n}(\psi):=\left\{ \bx \in [0,1]^{n}: \begin{cases}
        \|\bx-\tfrac{\bp}{q}\|<\frac{\psi(q)}{q} \quad \text{ for i.m. } \, \, q\in\N \\
        \text{For each } \, 0<c<1, \, \, \|\bx-\tfrac{\bp}{q}\|>c\frac{\psi(q)}{q} \quad \text{for all sufficiently large} \, \, q \in \N\, .
    \end{cases}
    \right\}.
\end{equation*}
Bandi and De Saxce \cite{BS24} recently obtained a Jarn\'ik-Besicovitch type statement in this setting. Namely, they showed that $\dimh Exact_{n}(\psi)=\frac{n+1}{1+\tau_{\psi}}$, where $\psi$ is taken as in Theorem~\ref{dim n>1} and $\tau_{\psi}>\tfrac{1}{n}$. Note the case of $n=1$ had been proven prior in \cite{BM11}. If $\bx \in Exact_{n}(\psi)$ it can be seen that there exists a sequence $(c_{q})_{q\geq 1}$ with $c_{q}\to 1$ as $q\to \infty$ such that
\begin{equation*}
    \begin{cases}
        \|\bx-\tfrac{\bp}{q}\|<\frac{\psi(q)}{q} \quad \text{ for i.m. } \, \, q\in\N \\
         \|\bx-\tfrac{\bp}{q}\|>c_{q}\frac{\psi(q)}{q} \quad \text{ for all} \, \, q \in \N\, .
    \end{cases}
\end{equation*}
A natural question to ask on the set $Exact_{n}(\psi)$ is if one can be more specific on the rate at which the sequence $(c_{q})_{q\geq 1}$ tends to one. To study this take a function $f:\N\to [0,1]$ with $f(q)\to 1$ as $q\to \infty$, and define the set of points of $(f,\psi)$-Exact approximation order by
\begin{equation*}
    Exact_{n}(f,\psi):=\left\{ \bx \in [0,1]^{n}: \begin{cases}
        \|\bx-\tfrac{\bp}{q}\|<\frac{\psi(q)}{q} \quad \text{ for i.m. } \, \, q\in\N \\
        \|\bx-\tfrac{\bp}{q}\|>f(q)\frac{\psi(q)}{q} \quad \text{ for all sufficiently large } \, \, q \in \N\, .
    \end{cases}
    \right\}\, .
\end{equation*}
Clearly $Exact_{n}(f,\psi)\subseteq Exact_{n}(\psi)$, and so we have the trivial upper bound 
\begin{equation*}
\dimh Exact_{n}(f,\psi)\leq \dimh Exact_{n}(\psi)\, .
\end{equation*}
Using the set of study in this note it can also be seen that
\begin{equation*}
    Exact_{n}( q\mapsto (1-\phi(q)),\psi)\subseteq W_{n}(\psi,\phi)\, .
\end{equation*} \par 
Note the following curious property on $\dimh W_{n}(\psi,\phi)$: for $n\geq 2$ it can be verified that 
\begin{equation*}
\tau_{\psi} \leq \tfrac{2}{n-1} \quad \text{ if and only if } \quad \frac{n+1}{1+\tau_{\psi}}\geq \frac{n+1+(n-1)\tau_{\phi}}{1+\tau_{\psi}+\tau_{\phi}}\, .
\end{equation*}
Thus, if $\tau_{\psi} \leq \tfrac{2}{n-1}$ and $\tau_{\phi}\to \infty$, then
\begin{equation*} 
    \dimh W_{n}(\psi,\phi)=n-1\, .
\end{equation*}
Whereas, if $\tau_{\psi} \geq \tfrac{2}{n-1}$, then regardless of our choice of $\tau_{\phi}$, we have that
\begin{equation*}
    \dimh W_{n}(\psi,\phi)=\frac{n+1}{1+\tau_{\psi}}\, .
\end{equation*}
This phenomenon is also seen in the case of weighted Diophantine approximation, see the comments proceeding \cite[Theorem 1]{R98}, and Remark~\ref{rectangles remark} for more details. \par 
 Combining our two observations above, Theorem~\ref{dim n>1} tells us that, for $n\geq 2$, if $\tau_{\psi}<\tfrac{2}{n-1}$ and $f(q)=1-q^{-\varepsilon}$ for some $\varepsilon>0$ then 
\begin{equation*}
\dimh Exact_{n}(f,\psi)\leq \frac{n+1+(n-1)\varepsilon}{1+\tau_{\psi}+\varepsilon} <\dimh Exact_{n}(\psi)\, .
\end{equation*}
Conversely, if $\tau_{\psi}>\tfrac{2}{n-1}$, then it may be possible that $\dimh Exact(f,\psi)=\dimh Exact(\psi)$ regardless of the rate at which $f$ tends to $1$. \par 

The rest of the note is as follows. Firstly, we generalise the above theorem in several instances. Precisely we consider various forms of quasi-annuli, including annuli formed from the difference of balls described by different norms (Theorem~\ref{eucildean norm}), and rectangular annuli (Theorem~\ref{dim >2 generalised}). Many of these results stem from a variation of inhomogeneous Diophantine approximation, which we describe in \S~\ref{perturbed}. In \S~\ref{prelim} we recall auxiliary results required in the proofs of our main theorems. Here, we use a variation of Cassel's Scaling Lemma \cite{C50} and a generalisation of the Mass Transference Principle \cite{BV06} to deduce a ``shifted mass transference principle'', Theorem~\ref{new} and Theorem~\ref{shifted mtprr}. These theorems are crucial in proving our main results, which are done in \S~\ref{prelim}-\ref{main}.

\subsection{Annuli defined by different norms}
We can consider the annuli described by taking the difference between balls described by different norms. Let $\|\cdot\|_{\rho}$ denotes the $\rho$-norm on $\R^{n}$, $\|\bx\|_{\rho}=\left(\sum_{i=1}^{n}x_{i}^{\rho}\right)^{\tfrac{1}{\rho}}$ and define the set
\begin{equation*}
    W_{n}(\psi, \|\cdot\|, \|\cdot\|_{\rho}):= \left\{ \bx \in [0,1]^{n}: \begin{cases}
        \|\bx-\tfrac{\bp}{q}\| < \tfrac{\psi(q)}{q} , \\
        \|\bx-\tfrac{\bp}{q}\|_{\rho}>\tfrac{\psi(q)}{q},
    \end{cases} \quad \text{for i.m.} \, q \in \N \right\}\, .
\end{equation*}
 For $\rho_1,\rho_2 \in \R_{+}\cup\{\infty\}$, $\bx \in [0,1]^{n}$ and $r>0$ let 
 \begin{equation*}
     A^{\rho_1}_{\rho_{2}}\left(\bx;r\right):= B_{\rho_{1}}(\bx,r)\backslash B_{\rho_{2}}(\bx,r) \, .
 \end{equation*}
 Then we may write
 \begin{equation*}
      W_{n}(\psi, \|\cdot\|, \|\cdot\|_{\rho})=\limsup_{q\to \infty} \bigcup_{0\leq \|\bp\|\leq q}A^{\infty}_{\rho}\left(\tfrac{\bp}{q};\tfrac{\psi(q)}{q}\right)\, .
 \end{equation*}
We have the following result.
\begin{theorem} \label{eucildean norm}
Let $\psi(q)=q^{-\tau_{\psi}}$ for $\tau_{\psi}\geq \tfrac{1}{n}$. Then, for any $\rho$-norm,
\begin{equation*}
    \dimh W_{n}(\psi,\|\cdot\|,\|\cdot\|_{\rho}) = \frac{n+1}{1+\tau_{\psi}}\, .
\end{equation*}
\end{theorem}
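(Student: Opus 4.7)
The plan is to handle the upper and lower bounds separately, with the upper bound being immediate and the lower bound requiring a simple geometric observation combined with the shifted mass transference principle developed in Section~\ref{prelim}.

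For the upper bound, I would simply use the trivial inclusion $W_n(\psi, \|\cdot\|, \|\cdot\|_\rho) \subseteq W_n(\psi)$ together with the classical Jarn\'ik-Besicovitch theorem recalled in the introduction. This gives $\dimh W_n(\psi, \|\cdot\|, \|\cdot\|_\rho) \leq \tfrac{n+1}{1+\tau_\psi}$.

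For the lower bound, the first step would be the geometric observation that for $n \geq 2$ and $\rho < \infty$ (outside of which the two norms coincide and the annulus is empty, so the statement is vacuous), there exist a fixed vector $\bv = \bv(n,\rho) \in \R^n$ and a constant $c = c(n,\rho) > 0$ such that
\begin{equation*}
    B(r\bv,\, c\, r) \;\subseteq\; A^{\infty}_{\rho}(\0;\, r) \qquad \text{for every } r > 0.
\end{equation*}
A concrete choice would be $\bv = (1 - \delta)(1, \dots, 1)$ with $\delta = \tfrac{1}{2}(1 - n^{-1/\rho})$ and $c = \delta/2$: one has $\|\bv\|_\infty = 1 - \delta < 1$ while $\|\bv\|_{\rho} = (1 - \delta)\, n^{1/\rho} = \tfrac{1}{2}(1 + n^{1/\rho}) > 1$, and the resulting slack in both norms accommodates a surrounding ball of radius $c$. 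Translating by $\bp/q$ and taking $r = \psi(q)/q$ would then yield
\begin{equation*}
    W_n(\psi, \|\cdot\|, \|\cdot\|_{\rho}) \;\supseteq\; \limsup_{q \to \infty} \bigcup_{0 \leq \|\bp\| \leq q} B\!\left(\frac{\bp + \psi(q)\,\bv}{q},\; \frac{c\,\psi(q)}{q}\right) \;=:\; \widetilde{W}.
\end{equation*}

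To conclude, I would apply the shifted mass transference principle (Theorem~\ref{new}) to convert the known dimension formula for $W_n(\psi)$ into the matching lower bound $\dimh \widetilde{W} \geq \tfrac{n+1}{1+\tau_\psi}$. The main obstacle is precisely the $q$-dependence of the shift $\psi(q)\bv/q$: classical inhomogeneous Jarn\'ik-Besicovitch results handle only a fixed translation $\bgamma$, whereas here the perturbation vanishes with $q$ at the same rate as the radii shrink. This is exactly the difficulty that Theorem~\ref{new}, built from the Cassels-type scaling lemma of Section~\ref{prelim} combined with the Wang-Wu rectangles-to-rectangles mass transference principle, is designed to absorb, so once the geometric inclusion above is in place the lower bound follows directly.
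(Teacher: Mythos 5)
Your proposal is essentially the paper's argument: upper bound from the trivial inclusion $W_n(\psi,\|\cdot\|,\|\cdot\|_\rho)\subseteq W_n(\psi)$, and lower bound by inscribing in each quasi-annulus a max-norm ball of radius $\asymp\psi(q)/q$ whose centre is the rational point shifted by a fixed multiple of $\psi(q)/q$, then applying the shifted mass transference principle (the paper routes this through Theorem~\ref{perturbed statement}, which is Theorem~\ref{new} combined with Khintchine's theorem to verify condition~\eqref{measure}). Two minor remarks: the shift $\psi(q)\bv/q$ must be taken with coordinate-wise signs chosen so that the translated centre remains in $[0,1]^n$, since Theorem~\ref{new} requires $x_i+\gamma_i\in X$ (this is what the paper's $\pm$ notation handles, and you should make it explicit), and your constants $\|\bv\|_\infty=\tfrac12(1+n^{-1/\rho})$, $c=\tfrac14(1-n^{-1/\rho})$ are in fact the correct ones, whereas the paper's displayed values $\tfrac{n^{1/\rho}+n^{-1/\rho}}{2}$ for the shift and $n^{1/\rho}-n^{-1/\rho}$ for the radius appear to contain a typo (that centre already lies outside $B_\infty(\bp/q,\psi(q)/q)$).
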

 \begin{remark} \rm
 What will become clear in the proof of Theorem~\ref{eucildean norm} is that we can take any norm that allows us to construct a ball of radius comparable, up to a multiplicative constant, with $\frac{\psi(q)}{q}$ within the resulting quasi-annulus.
\end{remark}
The following is a natural open problem akin to the sets of exact approximation order as defined above.\par
    \begin{question}
    Define
     \begin{equation*}
         Exact^{*}_{n}(\psi,\|\cdot\|,\|\cdot\|_{\rho}):= \left\{ \bx \in [0,1]^{n}: \begin{cases}
        \|\bx-\tfrac{\bp}{q}\| < \tfrac{\psi(q)}{q} , \quad \text{ for i.m. } q \in \N \\
        \|\bx-\tfrac{\bp}{q}\|_{\rho}>\tfrac{\psi(q)}{q}, \quad \text{for all sufficiently large} \, q \in \N 
        \end{cases} \right\}\, .
     \end{equation*}
     What can be said on the size of $Exact^{*}_{n}(\psi,\|\cdot\|,\|\cdot\|_{\rho})$? 
     \end{question}\par 
     One can use Khintchine's Theorem and the equivalence of norms to deduce that $Exact^{*}_{n}(\psi,\|\cdot\|,\|\cdot\|_{\rho})$ is a Lebesgue nullset. Notice that it is not true that $Exact_{n}(\psi) \subseteq Exact_{n}^{*}(\psi,\|\cdot\|,\|\cdot\|_{\rho})$, so the Hausdorff dimension result does not follow immediately from that of \cite{BS24}. However, it may be possible using the techniques of \cite{BS24}, or \cite{KLWZ}, to obtain the Hausdorff dimension of $Exact_{n}^{*}(\psi,\|\cdot\|,\|\cdot\|_{\rho})$ provided one can impose some additional ``directional dependence'' on the constructed Cantor sets in the respective papers. It would seem reasonable to conjecture that $$\dimh Exact_{n}^{*}(\psi,\|\cdot\|,\|\cdot\|_{\rho})=\tfrac{n+1}{1+\tau_{\psi}}.$$



\subsection{ Approximation by rectangular annuli } \label{rectangles}

Weighted Diophantine approximation generalises the set $W_{n}(\psi)$ by considering $\limsup$ sets of rectangles, rather than $\limsup$ set of balls. These sets have been well-studied, in particular, an analogue of the Jarn\'ik-Besicovitch theorem in the weighted case was proven by Rynne \cite{R98}. Similar to our setup in the introduction, we can also consider a weighted version of Diophantine approximation by annuli. \par 
Let $\Psi=(\psi_{1},\dots,\psi_{n})$ and $\Phi=(\phi_{1},\dots,\phi_{n})$ be $n$-tuples of monotonic decreasing functions $\psi_{i}:\N\to \R_{+}$ with each $\psi_{i}(q)\to 0$ as $q\to \infty$ and each $\phi_{i}:\N\to (0,1]$ for $i=1,\dots,n$. Define
\begin{equation*}
    W_{n}(\Psi,\Phi):=\left\{ \bx \in [0,1]^{n} : \begin{cases}
        \left|x_{i}-\frac{p_{i}}{q}\right|<\frac{\psi_{i}(q)}{q} \qquad \qquad \text{ for each } i=1,\dots, n \, , \\
         \left|x_{j}-\frac{p_{j}}{q}\right| > \frac{(1-\phi_{j}(q))\psi_{j}(q)}{q}\,\, \, \text{ for some } j \in \{1,\dots,n\}\, ,
        \end{cases}
         \text{ for i.m. } (\bp,q) \in \Z^{n}\times\N \right\}\, .
\end{equation*}
By defining the rectangular annuli about rational points $\tfrac{\bp}{q}$ as
\begin{equation*}
    A_{\bp,q}(\Psi,\Phi):= \left(\prod_{i=1}^{n}B\left(\tfrac{p_{i}}{q},\tfrac{\psi_{i}(q)}{q}\right)\right) \backslash \left(\prod_{i=1}^{n}B\left(\tfrac{p_{i}}{q},(1-\phi_{i}(q))\tfrac{\psi_{i}(q)}{q}\right)\right)\, ,
\end{equation*}
we may write
\begin{equation*}
    W_{n}(\Psi,\Phi)=\limsup_{q\to \infty}\bigcup_{0\leq \|\bp\|\leq q} A_{\bp,q}(\Psi,\Phi)\, .
\end{equation*}
We prove the following result.
\begin{theorem}\label{dim >2 generalised}
    Let each $\psi_{i}(q)=q^{-\tau_{\psi_i}}$ and each $\phi_{i}(q)=q^{-\tau_{\phi_i}}$ with $\tau_{\psi_i},\tau_{\phi_i} \in \R_{+}$. Additionally, assume that
    \begin{equation*}
        \sum_{i=1}^{n}\tau_{\psi_i}\geq 1\, .
    \end{equation*}
    Then
    \begin{equation*}
        \dimh W_{n}(\Psi,\Phi) = \max_{j=1,\dots ,n}\,\, \min_{k(j)=1,\dots,n} \left\{ \frac{n+1+\sum_{i\neq j : \tau_{\psi_i}<\tau_{k(j)}}(\tau_{k(j)}-\tau_{\psi_i})+\max\{0,\tau_{k(j)}-\tau_{\psi_j}-\tau_{\phi_j}\}}{1+\tau_{k(j)}} \right\}\, , 
    \end{equation*}
    where
    \begin{equation*}
        \tau_{k(j)}=\begin{cases}
            \tau_{\psi_j}+\tau_{\phi_j} \quad \text{ if } k(j)=j\, ,\\
            \tau_{\psi_{k(j)}}  \quad \qquad \text{ otherwise.}
        \end{cases}
    \end{equation*}
\end{theorem}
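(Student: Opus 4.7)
The plan is to decompose $W_n(\Psi,\Phi)$ according to which coordinate $j\in\{1,\ldots,n\}$ witnesses the outer inequality $|x_j-p_j/q|>(1-\phi_j(q))\psi_j(q)/q$. Writing $W_n(\Psi,\Phi)=\bigcup_{j=1}^n W_n^{(j)}(\Psi,\Phi)$ and using that the Hausdorff dimension of a finite union is the maximum of the constituent dimensions already accounts for the outer $\max_j$ in the claimed formula. It then suffices to compute $\dimh W_n^{(j)}(\Psi,\Phi)$ for each fixed $j$.

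For each $j$, the set $W_n^{(j)}(\Psi,\Phi)$ is a $\limsup$ of shifted rectangles: in directions $i\neq j$ the side length is $\psi_i(q)/q\asymp q^{-(1+\tau_{\psi_i})}$ centred at $p_i/q$, while in direction $j$ the annular condition carves out a thin strip of length $\phi_j(q)\psi_j(q)/q\asymp q^{-(1+\tau_{\psi_j}+\tau_{\phi_j})}$ offset from $p_j/q$ by about $q^{-(1+\tau_{\psi_j})}$. This is exactly the setting to which the shifted mass transference principle for rectangles, Theorem~\ref{shifted mtprr}, applies: starting from a suitable full-measure weighted rectangular $\limsup$ set (the assumption $\sum_i\tau_{\psi_i}\ge 1$ ensures that we can rescale to hit Lebesgue criticality for the source), the shifted MTPR transfers full measure into a Hausdorff dimension lower bound for the shrunken, off-centre rectangles defining $W_n^{(j)}$.

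The lower bound produced by the shifted MTPR is Rynne's weighted Jarn\'ik--Besicovitch dimension applied to the effective exponent vector obtained from $(\tau_{\psi_1},\ldots,\tau_{\psi_n})$ by replacing the $j$-th coordinate with $\tau_{\psi_j}+\tau_{\phi_j}$. Writing the resulting minimum over a choice of critical direction $k$ (playing the role of $k(j)$ in the statement) and splitting the sum defining the numerator into its $i=j$ and $i\ne j$ parts reproduces the expression inside the inner minimum: the $i\neq j$ part yields $\sum_{i\neq j:\tau_{\psi_i}<\tau_{k(j)}}(\tau_{k(j)}-\tau_{\psi_i})$, while the $i=j$ part equals $0$ when $k(j)=j$ and $\max\{0,\tau_{k(j)}-\tau_{\psi_j}-\tau_{\phi_j}\}$ otherwise. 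The matching upper bound comes from the natural cover: for each admissible $(\bp,q)$ cover $A_{\bp,q}(\Psi,\Phi)$ by balls of radius $q^{-(1+\tau_{k(j)})}$ adapted to the critical direction, count covers coordinate by coordinate, and apply the Hausdorff--Cantelli lemma at the optimising $k(j)$. Taking $\max_j$ completes the argument.

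The main obstacle I expect is the bookkeeping required to apply the shifted MTPR correctly in each direction $j$: one must verify that the offset ($\asymp q^{-(1+\tau_{\psi_j})}$ in coordinate $j$) and the shrinking ratios in the remaining coordinates fit the hypotheses of Theorem~\ref{shifted mtprr}, and that the optimal source choice truly produces Rynne's formula with the modified $j$-th weight. A minor subtlety is that each $W_n^{(j)}$ is really a $\limsup$ of a union of \emph{two} symmetric shifted rectangles per rational point (corresponding to $x_j-p_j/q$ being positive or negative), but this only doubles covering counts and has no effect on dimension.
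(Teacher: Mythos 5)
Your proposal is essentially the paper's own proof: decompose $W_n(\Psi,\Phi)$ into shifted rectangle $\limsup$ sets $\widetilde W_n^j(\Psi,\Phi)$ according to which coordinate $j$ witnesses the outer inequality, apply the shifted mass transference principle for rectangles (Theorem~\ref{shifted mtprr}) in each piece for the lower bound, and cover by balls of radius $q^{-(1+\tau_{k(j)})}$ adapted to the critical direction for the upper bound; the inner minimum over $k(j)$ and outer maximum over $j$ then fall out exactly as you describe. The ``bookkeeping'' you flag is in fact the one genuinely delicate point, and it is where the hypothesis $\sum_i\tau_{\psi_i}\ge 1$ enters: to invoke Theorem~\ref{shifted mtprr} one needs a full-measure source $\limsup$ of rectangles $R(\bx_i,r_i^{\bfa})$ \emph{and} nonnegative shrinking exponents $t_i\ge 0$. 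The paper secures both by choosing source exponents $b_i$ with $\sum_i b_i=1$ (so Minkowski's theorem on linear forms gives full measure) and $b_i\le\tau_{\psi_i}$, and this forces a case split: when every $\tau_{\psi_i}\ge 1/n$ one may take $b_i=1/n$, but if some $\tau_{\psi_i}<1/n$ that choice would make $t_i<0$, so the paper instead sets $b_i=\tau_{\psi_i}$ for the small exponents and redistributes the remaining budget uniformly over the larger ones; the assumption $\sum_i\tau_{\psi_i}\ge 1$ is exactly what makes this redistribution possible. After that, plugging the chosen $\bfa,\bft$ and the shift $\gamma_j(q)\asymp\psi_j(q)/q$ into the Wang--Wu formula and simplifying does recover the Rynne-type expression with the $j$-th weight replaced by $\tau_{\psi_j}+\tau_{\phi_j}$, as you predict.
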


\begin{remark}\rm \label{rectangles remark}
    Observe that if $\tau_{\psi_i}=\tau_{\psi_j}$ and $\tau_{\phi_i}=\tau_{\phi_j}$ for each $i,j \in \{1,\dots,n\}$ then we reduce to Theorem~\ref{dim n>1}. Thus by proving Theorem~\ref{dim >2 generalised} we readily prove Theorem~\ref{dim n>1}. As previously mentioned, even in the case when $\tau_{\psi_i}=\tau_{\psi_j}$ and $\tau_{\phi_i}=\tau_{\phi_j}$ for each $i,j \in \{1,\dots,n\}$, we have this phenomenon where the rate of decay of the inner radius can be ignored if the decay rate of the outer radii is sufficiently fast, that is $\tau_{\psi}\geq \tfrac{2}{n-1}$. This same curiosity appears in the case of weighted Diophantine approximation. In the case of weighted approximation one can think of the result as arguing that for $\tau_{\psi}$ sufficiently large one does not lose any accuracy in the dimension result by considering the dimension of the approximation set intersected with a coordinate hyperplane through the origin, hence why at the bound $\tau_{\psi}=\tfrac{2}{n-1}$ one obtains the dimension $n-1$. This idea works since there are infinitely many rational points on such a hyperplane. This does not appear to be the case in the setting presented in this paper, yet we obtain the same result.
\end{remark}

%
%
%
%

\section{Perturbed Diophantine approximation} \label{perturbed}
In this section, we describe a generalised notion of inhomogeneous approximation that is used to prove Theorem~\ref{eucildean norm}. The setup presented here may also be of independent interest. \par 
Let $\psi:\N\to \R_{+}$ be a function with $\psi(q)\to 0$ as $q\to \infty$ and let $\bgam:\Z^{n+1}\to[-\tfrac{1}{2},\tfrac{1}{2}]^{n}$ with $\|\bgam(\bp,q)\|\to 0$ as $q\to \infty$. Define the set of $\bgam$-perturbed $\psi$-approximable points as the set
\begin{equation*}
W^{\bgam}_{n}(\psi):=\left\{ \bx \in [0,1]^{n}: \|q\bx-\bp+\bgam(\bp,q)\|<\psi(q) \quad \text{ for i.m. } (\bp,q) \in \Z^{n}\times \N \right\}.
\end{equation*}
Here $W^{\bgam}_{n}(\psi)$ can be thought of as a $\limsup$ set of balls similar to $W_{n}(\psi)$, but with centres perturbed by $\tfrac{\bgam(\bp,q)}{q}$. We prove the following statement on the Hausdorff $f$-measure, denoted $\cH^{f}$, of $W^{\bgam}_{n}(\psi)$.
\begin{theorem} \label{perturbed statement}
Let $\psi:\N\to\R_{+}$ and $\bgam:\Z^{n+1}\to [-\tfrac{1}{2},\tfrac{1}{2}]^{n}$ be as above and let $f:\R_{+}\to \R_{+}$ be a dimension function such that $r^{-n}f(r)$ is monotonic. Suppose that
\begin{equation}\label{kick size}
\underset{\psi(q)\neq 0}{\limsup_{q \to \infty}}\frac{\max_{1\leq \|\bp\|\leq q}\|\bgam(\bp,q)\|}{q f\left(\tfrac{\psi(q)}{q}\right)^{\tfrac{1}{n}}}<\infty\, .
\end{equation}
Then
\begin{equation*}
\cH^{f}\left(W^{\bgam}_{n}(\psi)\right)=\begin{cases}
0 \qquad \qquad \,\,\, \, &\text{\rm if } \quad\sum_{q=1}^{\infty} q^{n} f\left(\tfrac{\psi(q)}{q}\right)<\infty \, , \\
\cH^{f}([0,1]) \quad &\text{\rm if }\quad \sum_{q=1}^{\infty} q^{n} f\left(\tfrac{\psi(q)}{q}\right)=\infty \, .
\end{cases}
\end{equation*}
\end{theorem}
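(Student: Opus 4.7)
\emph{Convergence.} This direction is routine. Using the monotonicity of $r^{-n}f(r)$ (which gives $f(cr)\ll c^{n}f(r)$) together with the natural tail cover
\[
W^{\bgam}_{n}(\psi)\ \subseteq\ \bigcup_{q\geq Q}\bigcup_{\|\bp\|\leq q} B\!\left(\tfrac{\bp}{q}-\tfrac{\bgam(\bp,q)}{q},\,\tfrac{\psi(q)}{q}\right)\qquad (Q\geq 1),
\]
the Hausdorff--Cantelli lemma yields $\cH^{f}(W^{\bgam}_{n}(\psi))\ll \sum_{q\geq Q} q^{n}f(\psi(q)/q)\to 0$ as $Q\to\infty$ under the convergence hypothesis.

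\emph{Divergence.} Here the strategy is to invoke the Beresnevich--Velani Mass Transference Principle \cite{BV06}, with hypothesis (\ref{kick size}) precisely calibrated to absorb the perturbation $\bgam$ into the MTP-dilation scale. Write $r_{q}:=\psi(q)/q$, $\tilde{r}_{q}:=f(r_{q})^{1/n}$, and let $B^{f}_{\bp,q}:=B(\bp/q-\bgam(\bp,q)/q,\tilde{r}_{q})$ denote the $f$-dilated perturbed ball. The MTP reduces the desired identity $\cH^{f}(W^{\bgam}_{n}(\psi))=\cH^{f}([0,1])$ to verifying that $\limsup_{q}\bigcup_{\bp} B^{f}_{\bp,q}$ has full Lebesgue measure in $[0,1]^{n}$. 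Hypothesis (\ref{kick size}) supplies a constant $C$ with $\|\bgam(\bp,q)\|/q\leq C\tilde{r}_{q}$ for all sufficiently large $q$ and all $1\leq\|\bp\|\leq q$; by the triangle inequality,
\[
B(\bp/q,\tilde{r}_{q})\ \subseteq\ B\!\left(\tfrac{\bp}{q}-\tfrac{\bgam(\bp,q)}{q},\,(1+C)\tilde{r}_{q}\right).
\]
Hence the unperturbed limsup $W_{n}(\tilde{\psi})$ with $\tilde{\psi}(q):=q\tilde{r}_{q}$ is contained in the limsup of perturbed balls inflated by the factor $(1+C)$. The divergence $\sum q^{n}f(\psi(q)/q)=\sum \tilde{\psi}(q)^{n}=\infty$ is exactly the Khintchine--Gallagher divergence condition for $W_{n}(\tilde{\psi})$; invoking Gallagher's theorem for $n\geq 2$ to sidestep monotonicity (or passing to a monotone minorant when $n=1$) gives $\Leb(W_{n}(\tilde{\psi}))=1$, and therefore the inflated perturbed limsup is also of full Lebesgue measure. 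Applying the MTP with the rescaled dimension function $g(r):=(1+C)^{n}f(r)$ (which inherits the required monotonicity of $r^{-n}g(r)$ from $f$) yields $\cH^{g}(W^{\bgam}_{n}(\psi))=\cH^{g}([0,1])$, and since $\cH^{g}=(1+C)^{n}\cH^{f}$, the conclusion for $\cH^{f}$ follows.

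\emph{Main obstacle.} The essential content is the precise calibration in (\ref{kick size}): the perturbation is required to live at the MTP-dilation scale $\tilde{r}_{q}=f(\psi(q)/q)^{1/n}$ and no larger, so that inflating the radius by a universal constant kills the shift in centres. Any weaker decay on $\bgam$ would break the containment above and prevent us from comparing the perturbed inflated balls to the standard $W_{n}(\tilde{\psi})$. The minor technical wrinkles (non-monotonicity of $\tilde{\psi}$, the possibility that $\psi(q)=0$ on a subsequence, and the restriction $\|\bp\|\geq 1$ in (\ref{kick size})) are handled by standard monotone-minorant and subsequence arguments.
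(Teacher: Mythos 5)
Your proof is correct, and the convergence half coincides with the paper's (both are routine Hausdorff--Cantelli covering arguments). For the divergence half you take a genuinely different route. The paper's strategy is to first establish a reusable ``shifted Mass Transference Principle'' (Theorem~\ref{new}): it proves that $\mu\bigl(\limsup_i (B_i^f+\gamma_i)\bigr)=\mu\bigl(\limsup_i B_i^f\bigr)$ by a two-step application of Cassel's Scaling Lemma (Lemma~\ref{CI_balls}) to the nested pairs of sequences, and only then invokes the Beresnevich--Velani MTP on the shifted balls with the original dimension function $f$; Theorem~\ref{perturbed statement} follows by feeding Khintchine's theorem into Theorem~\ref{new}. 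You instead bypass the scaling lemma entirely: you observe that hypothesis~\eqref{kick size} forces each unperturbed $f$-dilated ball $B(\bp/q,\tilde r_q)$ to sit inside the perturbed ball of radius $(1+C)\tilde r_q$, so full Lebesgue measure of the unperturbed limsup (Khintchine) transfers to the inflated perturbed limsup by inclusion, and you then run the MTP with the constant-rescaled dimension function $g=(1+C)^nf$ and use $\cH^g=(1+C)^n\cH^f$. This is a more elementary argument for the measure step --- it replaces the Cassel-scaling comparison (which gives an equality of limsup measures) with a one-line containment that suffices because only full measure is needed --- at the cost of having to carry the constant $(1+C)$ through the dimension function, which is harmless since $r^{-n}g(r)$ inherits monotonicity and Hausdorff measures scale linearly under constant rescaling of the gauge. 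What the paper's route buys is the standalone Theorem~\ref{new}, which is stated as an $\cH^f$-measure equality on every ball and is reused as a template for the rectangular analogue (Theorem~\ref{shifted mtprr}); your route is shorter for this one theorem but does not yield that intermediate statement. One small imprecision in your write-up: you first say the MTP ``reduces to verifying that $\limsup_q\bigcup_\bp B^f_{\bp,q}$ has full Lebesgue measure,'' but what you actually verify (and what you actually need) is full measure of the $g$-dilated perturbed limsup; you correct course at the end when you invoke the MTP with $g$, but the intermediate sentence should refer to $g$ rather than $f$.
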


 For the Hausdorff dimension, we can deduce the following:
 \begin{corollary} \label{perturbed corollary}
Let $\psi(q)=q^{-\tau_{\psi}}$ for some $\tau_{\psi}\geq \tfrac{1}{n}$ and $\bgam:\Z^{n+1}\to [-\tfrac{1}{2},\tfrac{1}{2}]^{n}$. Suppose that
\begin{equation*}
    \limsup_{q\to \infty} \frac{\max_{1\leq \|\bp\|\leq q} \|\bgam(\bp,q)\|}{q^{-\tfrac{1}{n}}}<\infty\, .
\end{equation*}
Then $\dimh W^{\bgam}_{n}(\psi)=\frac{n+1}{1+\tau_{\psi}}$.
 \end{corollary}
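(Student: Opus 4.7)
The plan is to derive Corollary~\ref{perturbed corollary} from Theorem~\ref{perturbed statement} by applying it with the gauge function $f(r)=r^{s}$ for $s$ close to the conjectured critical exponent. First, for the upper bound I would use a direct covering argument: observing that $W_n^{\bgam}(\psi)$ is contained in $\limsup_{q\to\infty}\bigcup_{\|\bp\|\leq q} B\!\left(\tfrac{\bp-\bgam(\bp,q)}{q},\tfrac{\psi(q)}{q}\right)$, a $\limsup$ set of balls of radius $q^{-1-\tau_\psi}$, the $s$-dimensional Hausdorff sum from stage $Q$ onward is bounded by $\sum_{q\geq Q}(2q+1)^n q^{-s(1+\tau_\psi)}$, which converges for every $s>\tfrac{n+1}{1+\tau_\psi}$ and thus forces $\dimh W_n^{\bgam}(\psi)\leq \tfrac{n+1}{1+\tau_\psi}$.

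For the lower bound, I would apply Theorem~\ref{perturbed statement} with $f(r)=r^s$ for an arbitrary $s<\tfrac{n+1}{1+\tau_\psi}$. The function $r^{-n}f(r)=r^{s-n}$ is monotonic, as required. Substituting $\psi(q)/q = q^{-1-\tau_\psi}$ gives $f(\psi(q)/q)^{1/n}=q^{-s(1+\tau_\psi)/n}$, so the standing hypothesis $\max_{1\leq\|\bp\|\leq q}\|\bgam(\bp,q)\|\ll q^{-1/n}$ reduces condition~(\ref{kick size}) of Theorem~\ref{perturbed statement} to
\[
\frac{\max_{1\leq \|\bp\|\leq q}\|\bgam(\bp,q)\|}{q\, f(\psi(q)/q)^{1/n}}\ \ll\ q^{\,s(1+\tau_\psi)/n \,-\, (n+1)/n},
\]
which is bounded whenever $s\leq \tfrac{n+1}{1+\tau_\psi}$. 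Under the same constraint on $s$, the series $\sum q^n f(\psi(q)/q)=\sum q^{\,n-s(1+\tau_\psi)}$ diverges. Since $\tau_\psi\geq 1/n$ forces $\tfrac{n+1}{1+\tau_\psi}\leq n$, the chosen $s$ is strictly less than $n$, hence $\mathcal{H}^s([0,1]^n)=\infty$. Therefore Theorem~\ref{perturbed statement} yields $\mathcal{H}^s(W_n^{\bgam}(\psi))=\infty$ and so $\dimh W_n^{\bgam}(\psi)\geq s$; letting $s\nearrow \tfrac{n+1}{1+\tau_\psi}$ completes the proof.

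This deduction is essentially bookkeeping, with no substantial obstacle. The one subtle point to verify is that the decay hypothesis on $\bgam$ in Corollary~\ref{perturbed corollary} is calibrated \emph{exactly} so that (\ref{kick size}) remains valid all the way up to the critical exponent $s=\tfrac{n+1}{1+\tau_\psi}$, which is also the precise threshold at which the Hausdorff series transitions from divergence to convergence. A weaker hypothesis on $\bgam$ would shrink the admissible range of $s$ and correspondingly weaken the lower bound on the dimension.
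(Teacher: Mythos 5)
Your deduction is correct and is precisely the intended derivation of the corollary from Theorem~\ref{perturbed statement}; the paper gives no separate proof, stating only that the corollary follows. Choosing $f(r)=r^s$, checking that condition~\eqref{kick size} and the divergence of $\sum q^n f(\psi(q)/q)$ both hold exactly for $s\le\frac{n+1}{1+\tau_\psi}$, noting $\tau_\psi\ge\frac1n$ guarantees $s<n$ so that $\cH^s([0,1]^n)=\infty$, and pairing this with a standard covering (or, equivalently, the convergence half of Theorem~\ref{perturbed statement}) for the upper bound is exactly the bookkeeping the authors have in mind.
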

 Theorem~\ref{perturbed statement} and Corollary~\ref{perturbed corollary} essentially tell us that we can perturb our rational points by some small amount and still obtain not only the same Hausdorff dimension but also the same Hausdorff $f$-measure as in Jarn\'ik's theorem. Note that the shift can depend on both $q$ and $\bp$. In our application of Theorem~\ref{perturbed statement} to prove Theorem~\ref{eucildean norm} our perturbation is described in terms of $q$ only. It would be interesting to find an application that utilises the full strength of Theorem~\ref{perturbed statement}. \par 
 Notice that Corollary~\ref{perturbed corollary} makes clear that the perturbation can be made independent of $\psi$. However, in both Theorem~\ref{perturbed statement} and Corollary~\ref{perturbed corollary} we require some control over the size of the perturbation, this leads to the following open question.
\begin{question}
    Can condition \eqref{kick size} be weakened?\footnote{This problem, in dimension $1$, was introduced to the second name author by Gerardo Gonzalez-Robert and Nikita Shulga, who in turn were introduced to the problem by Alexander Fish.}.
    That is, how much can we perturb the rational points $\tfrac{\bp}{q}$ while leaving the Hausdorff $f$-measure, or Hausdorff dimension, unchanged? 
    \end{question}\par 
    
    With Theorem~\ref{perturbed statement} we are now in a position to prove Theorem~\ref{eucildean norm}.
\begin{proof}[Proof of Theorem~\ref{eucildean norm}]
The upper bound Hausdorff dimension follows from a standard covering argument. \par 
For the lower bound Hausdorff dimension result consider the following subset. Note that for any $\bx\in[0,1]^{n}$, $0<r<1$ and $0<\rho<\infty$ we have that
\begin{equation*}
    B_{\infty}\left(\tfrac{\bp}{q},\tfrac{\psi(q)}{q}\right)\backslash B_{\rho}\left(\tfrac{\bp}{q},\tfrac{\psi(q)}{q}\right) \supseteq B_{\infty}\left(\tfrac{\bp}{q} + \left(\pm \tfrac{\psi(q)}{q}\tfrac{n^{1/\rho}+n^{-1/\rho}}{2},\dots, \pm \tfrac{\psi(q)}{q}\tfrac{n^{1/\rho}+n^{-1/\rho}}{2}\right), \tfrac{\psi(q)}{q}(n^{1/\rho}-n^{-1/\rho}) \right) \, ,
\end{equation*}
where the sign in each coordinate axis is chosen to ensure we remain inside $[0,1]^{n}$. Thus
\begin{equation*}
    \limsup_{q\to \infty} \bigcup_{0\leq \|\bp\|\leq q} B_{\infty}\left(\tfrac{\bp}{q} + \left(\pm \tfrac{\psi(q)}{q}\tfrac{n^{1/\rho}+n^{-1/\rho}}{2},\dots, \pm \tfrac{\psi(q)}{q}\tfrac{n^{1/\rho}+n^{-1/\rho}}{2}\right), \tfrac{\psi(q)}{q}(n^{1/\rho}-n^{-1/\rho}) \right) \subseteq W_{n}(\psi,\|\cdot\|,\|\cdot\|_{\rho})\, . 
\end{equation*}
We now apply Theorem~\ref{perturbed statement} to the above $\limsup$ set. Let
\begin{equation*}
    \bgam(\bp,q)=\left(\pm \psi(q)\tfrac{n^{1/\rho}+n^{-1/\rho}}{2},\dots, \pm \psi(q)\tfrac{n^{1/\rho}+n^{-1/\rho}}{2}\right) \, .
\end{equation*}
Pick dimension function $f(r)=r^{\tfrac{n+1}{1+\tau_{\psi}}}$. Then $r^{-n}f(r)$ is monotonic and so our conditions in Theorem~\ref{perturbed statement} on the dimension function are satisfied. By our condition that $\tau_{\psi}\geq \tfrac{1}{n}$, for sufficiently large $q\in \N$ we have that
\begin{equation*}
    \limsup_{q\to \infty}\frac{q^{-\tau_{\psi}}\tfrac{n^{1/\rho}+n^{-1/\rho}}{2}}{q\left((n^{1/\rho}-n^{-1/\rho})q^{-(1+\tau_{\psi})}\right)^{\tfrac{n+1}{1+\tau_{\psi}}\tfrac{1}{n}}}=\limsup_{q\to \infty} q^{-\tau_{\psi}+\tfrac{1}{n}}\tfrac{n^{1/\rho}+n^{-1/\rho}}{2(n^{1/\rho}-n^{-1/\rho})^{\tfrac{n+1}{1+\tau_{\psi}}\tfrac{1}{n}}} \leq \tfrac{n^{1/\rho}+n^{-1/\rho}}{2(n^{1/\rho}-n^{-1/\rho})^{\tfrac{n+1}{1+\tau_{\psi}}\tfrac{1}{n}}} < \infty \, , 
\end{equation*}
and so condition~\eqref{kick size} is satisfied. To complete the proof it is easy to check that we are in the divergence case of Theorem~\ref{perturbed statement} and so we obtain the lower bound dimension result.
\end{proof}

%
%
%
%

\section{Preliminaries}\label{prelim}

In this section we novelly combine variations of Cassel's Scaling Lemma with generalisations of Beresnevich and Velani's Mass Transference Principle to give us shifted Mass Transference Principles.\par 
Let us begin by defining the setup. For each $i\in \{1,\dots,n\}$ let $(X_{i},d_{i},\mu_{i})$ be a bounded locally compact metric space equipped with a $\delta_{i}$-Ahlfors regular probability measure $\mu_{i}$ with $\text{supp} \mu_{i}= X_{i}$, and let $(X,d,\mu)$ denote the product space with $X=\prod_{i=1}^{n}X_{i}$, $d=\max_{i}d_{i}$ and $\mu=\prod_{i} \mu_{i}$ the product measure. Note that $\mu$ is $\delta:=\sum_{i}\delta_{i}$-Ahlfors regular. Let $B^{(i)}=B^{(i)}(x,r)\subset X_{i}$ denote a ball in $X_{i}$ with center $x \in X_{i}$ and radius $r(B)=r>0$, and let $R(\bx,\br)=\prod_{i=1}^{n}B^{(i)}(x_{i},r_{i})\subset X$ denote the hyperrectangle with center $\bx \in X$ and sidelengths $2r_{i}$ in each $i$th metric space. If each $r_{i}=r^{a_{i}}$ for some $a_{i}\in \R_{+}$ let $\bfa=(a_{1},\dots,a_{n})$ and let $R(\bx,r^{\bfa})=\prod_{i=1}^{n}B^{(i)}(x_{i},r^{a_{i}})$. Lastly, if $r=r_{1}=\dots=r_{n}$ let $B(\bx,r)=\prod_{i=1}^{n}B^{(i)}(x_{i},r)$ denote the ball in $X$ with center $\x\in X$ and radius $r>0$. \par
In this note we will often take $X_{i}=[0,1]$, so $X=[0,1]^{n}$, $d_{i}(x,y)=|x-y|$, so $d(\bx,\by)=\|\bx-\by\|$ the max norm, $\mu_{i}=\lambda$ for $\lambda$ the $1$-dimensional Lebesgue measure. \par
Recall a dimension function $f:\R_{+}\to \R_{+}$ is a continuous, nondecreasing function such that $f(r)\to 0$ as $r\to 0$. We will use the following notations for manipulations of a ball $B=B(\bx,r)$ with centre $\bx \in X$ and radius $r>0$: 
\begin{itemize}
    \item For dimension function $f$, let $B^{f}=B(\bx,f(r)^{\tfrac{1}{\delta}})$ denote the ball with the same centre, but the radius blown up depending on $\delta$ and $f$.
    \item For any constant $c>0$, let $cB=B(\bx,cr)$ denote the ball with the same centre as $B$ but with radius scaled by a factor $c$.
    \item For $\by \in X$, let $(B+\by)=B(\bx+\by,r)$ denote the ball $B$ with unchanged radius and centre shifted by $\by$.
\end{itemize}
 The well-known Mass Transference Principle of Beresnevich-Velani \cite{BV06} is as follows:
\begin{theorem}[\cite{BV06}]\label{MTP}
Let $(X,d,\mu)$ be as above and let $(B_{i})_{i\geq1}$ be a sequence of balls with radius $r(B_{i})\to 0$ as $i \to \infty$. Let $f$ be a dimension function such that $r^{-\delta}f(r)$ is monotonic. Suppose that for any ball $B\subseteq X$
\begin{equation*}
\mu\left(B\cap \limsup_{i \to \infty} B_{i}^{f}\right)=\mu(B). 
\end{equation*}
Then, for any ball $B\subseteq X$
\begin{equation*}
\cH^{f}\left(B\cap \limsup_{i\to \infty} B_{i}\right)=\cH^{f}(B).
\end{equation*}
\end{theorem}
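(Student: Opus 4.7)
The plan is to produce, inside an arbitrary ball $B\subseteq X$, a Cantor-like subset $K_\infty\subseteq B\cap\limsup_i B_i$ together with a probability measure $\nu$ supported on $K_\infty$ that satisfies $\nu(D)\lesssim f(r(D))$ for every ball $D\subseteq X$. By the mass distribution principle this will force $\cH^f(K_\infty)>0$, hence $\cH^f(B\cap\limsup_i B_i)>0$; since the hypothesis is scale-invariant and $B$ was arbitrary, a density argument for $\cH^f$ then upgrades positivity to the equality $\cH^f(B\cap\limsup_i B_i)=\cH^f(B)$.

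The heart of the argument is the inductive Cantor construction. Let $\mathcal{J}_0=\{B\}$. Given $\mathcal{J}_{n-1}$, for each parent $J\in\mathcal{J}_{n-1}$ I would invoke the hypothesis $\mu(J\cap\limsup_i B_i^f)=\mu(J)$ together with a $5r$-covering argument applied at a sufficiently small scale $\rho_n>0$ to extract a finite pairwise-disjoint collection $\{B_{i_j}^f\}_j\subseteq J$ whose union captures at least half of $\mu(J)$. The $n$-th generation $\mathcal{J}_n$ would then be declared to consist of the \emph{un-enlarged} balls $B_{i_j}$ themselves. In the nontrivial regime $f(r)\geq r^\delta$ one has $r(B_i)\leq r(B_i^f)$, so each child $B_{i_j}$ sits inside its enlargement $B_{i_j}^f\subseteq J$, and the collection $\mathcal{J}_n$ remains pairwise disjoint inside $J$. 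Forcing $\rho_n\to 0$ along $n$ guarantees that $K_\infty=\bigcap_n\bigcup_{J\in\mathcal{J}_n}J$ is contained in $\limsup_i B_i$ by construction.

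The measure $\nu$ is built by distributing mass from a parent $J$ to its children in proportion to $\mu(B_{i_j}^f)\asymp r(B_{i_j}^f)^\delta=f(r(B_{i_j}))$, normalised so that $\nu(B)=1$. To verify the essential bound $\nu(D)\lesssim f(r(D))$ for a generic ball $D$, I would locate the generation $n$ with $\rho_{n+1}<r(D)\leq\rho_n$, count how many level-$n$ children can meet $D$ using Ahlfors regularity and the disjointness of the $\{B_{i_j}^f\}$, and then use the monotonicity of $r^{-\delta}f(r)$ to transfer these volume bounds through the scale gap between $r(B_{i_j})$ and $r(B_{i_j}^f)$. Combining this with the mass-distribution principle closes the loop.

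The main obstacle is exactly this scale mismatch: the hypothesis lives on balls of radius $f(r_i)^{1/\delta}$, whereas the target set uses balls of radius $r_i$, and the measure $\nu$ that I construct assigns mass in units of $f(r_i)$ rather than $\mu(B_i)$. Propagating estimates rigorously across these two scales is the technical core of the proof and is precisely what the monotonicity of $r^{-\delta}f(r)$ is designed to enable. The complementary regime $f(r)\leq r^\delta$ is immediate: then $B_i^f\subseteq B_i$, so the hypothesis passes directly to $\mu(B\cap\limsup_i B_i)=\mu(B)$, and Ahlfors regularity yields $\cH^f\lesssim\mu$, whence $\cH^f(B\setminus\limsup_i B_i)=0$ without further work.
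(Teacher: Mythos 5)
The paper does not prove Theorem~\ref{MTP}: it is quoted verbatim from Beresnevich--Velani~\cite{BV06} as a known tool, so there is no internal proof against which to compare your sketch. Judged against the actual argument in \cite{BV06}, your outline has the right genus (construct a Cantor subset of $B\cap\limsup_i B_i$, put a Frostman-type measure on it, apply the mass distribution principle), but two steps do not go through as written.

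First, the H\"older estimate $\nu(D)\lesssim f(r(D))$ fails for the measure you propose. With children being the shrunken balls $B_{i_j}$ and mass assigned in proportion to $\mu(B_{i_j}^{f})$, a level-$n$ child $B^{(n)}$ of radius $\rho_n$ receives mass
\[
m_n \;=\; \prod_{k=1}^{n}\frac{\mu\bigl(B^{(k),f}\bigr)}{S_k}
\;\asymp\; \prod_{k=1}^{n}\frac{f(\rho_k)}{\rho_{k-1}^{\delta}}\,,
\qquad
\frac{m_n}{f(\rho_n)} \;\asymp\; \frac{1}{\rho_0^{\delta}}\prod_{k=1}^{n-1} g(\rho_k)\,,
\]
where $g(r)=r^{-\delta}f(r)$ and $S_k$ is the $\mu$-mass captured at level $k$. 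In the non-trivial regime $g$ is decreasing with $g(r)\to\infty$ as $r\to0$, so this ratio diverges and no uniform constant exists for the mass distribution principle; choosing a uniform distribution among children gives the same blow-up, since the number of admissible children per parent is forced to be $\asymp \rho_{k-1}^{\delta}/f(\rho_k)$ by disjointness of the enlarged balls. This is precisely where the subtlety of \cite{BV06} lives: their local construction $K_{G,B}$ and its measure are tuned to a free parameter and satisfy a \emph{two-term} estimate of the shape $\nu(A)\ll \mu(A) + G^{-1}f(r(A))$, from which one extracts a lower bound on $\cH^{f}$ that can be pushed to $\infty$; a single probability measure with the na\"{\i}ve $f$-proportional weights does not achieve this.

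Second, even granting $\cH^{f}(K_\infty)>0$, the claimed ``density argument'' from positivity to $\cH^{f}(B\cap\limsup_i B_i)=\cH^{f}(B)$ is unjustified. In the non-trivial regime $\cH^{f}(B)=\infty$, and $\cH^{f}(B'\cap E)>0$ for every sub-ball $B'$ does not force $\cH^{f}(B\cap E)=\infty$: a Borel set $E$ can meet every ball in positive $\cH^{f}$-mass while $\cH^{f}\restriction_E$ is a finite measure, and the quantitative lower bound your construction yields is at best $\cH^{f}(B'\cap E)\gtrsim\mu(B')$, which cannot be summed to infinity inside a bounded $B$. The proof in \cite{BV06} circumvents this by producing, for each $\eta>0$, a set with $\cH^{f}$-measure at least $\eta$, rather than merely positive, and then letting $\eta\to\infty$. (Your treatment of the complementary regime $f(r)\le r^{\delta}$, where $\cH^{f}\lesssim\mu$ and the hypothesis passes straight through, is fine.)
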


We now give the second measure-theoretic tool required. The following lemma is a generalisation of Cassel's scaling Lemma \cite{C50} for balls.

\begin{lemma}[{\cite[Lemma 5.3]{BDGW23}}] \label{CI_balls}
Let $(X, d, \mu)$ be as above and let $(B_{i})_{i \in \N}$ be a sequence of balls in $X$ with the radii $r(B_{i}) \to 0$ as $i \to \infty$. 
Let $(U_{i})_{i \in \N}$ be a sequence of $\mu$-measurable subsets of $X$ such that $U_{i} \subset B_{i}$ for each $i \in \N$ and there exists some fixed constant $c_{2}>0$ such that
\begin{equation*}
\mu(U_{i}) \geq c_{2}\,\mu(B_{i}).
\end{equation*}
Then the difference of the limsup sets
\begin{equation*}
\cB=\limsup_{i \to \infty} B_{i} \qquad \text{and} \qquad \cU=\limsup_{i \to \infty} U_{i}
\end{equation*}
is null, that is $\mu(\cB\setminus\cU)=0$.
\end{lemma}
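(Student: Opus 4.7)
The proof I would give proceeds by a Lebesgue density argument. Set $V_i:=B_i\setminus U_i$, so that the hypothesis immediately yields $\mu(V_i)\le(1-c_2)\mu(B_i)$. A point $x\in\cB\setminus\cU$ lies in $B_i$ for infinitely many $i$ and in $U_i$ for only finitely many $i$, and hence in $V_i$ for infinitely many $i$; this gives the set-theoretic inclusion $\cB\setminus\cU\subseteq\limsup_i V_i$. Because $\sum\mu(V_i)$ may well diverge, this inclusion alone is not enough, so to exploit the deficit $\mu(V_i)/\mu(B_i)\le 1-c_2$ I would stratify: for each $N\in\N$ define
\[A_N:=\cB\setminus\bigcup_{i\ge N}U_i,\]
so that $(A_N)_N$ is increasing with $\cB\setminus\cU=\bigcup_N A_N$. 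The crucial feature of $A_N$ is that any point of $A_N$ which lies in some $B_i$ with $i\ge N$ must, by construction, lie in $V_i$.

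Suppose for contradiction that $\mu(\cB\setminus\cU)>0$; then $\mu(A_N)>0$ for all sufficiently large $N$. Since $\mu$ is $\delta$-Ahlfors regular, hence doubling, the Lebesgue differentiation theorem in this metric-measure setting supplies a density point $x_0$ of $A_N$. Because $x_0\in\cB$, one can extract a subsequence $(B_{i_k})_k$ with $x_0\in B_{i_k}$, $r(B_{i_k})\to 0$, and $i_k\ge N$ for all large $k$. The triangle inequality gives $B_{i_k}\subseteq B(x_0,2r(B_{i_k}))$, and Ahlfors regularity gives $\mu(B(x_0,2r(B_{i_k})))\le K\mu(B_{i_k})$ for a constant $K$ depending only on $\delta$ and the Ahlfors constants. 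Combining this comparison with the density property of $x_0$ forces
\[\frac{\mu(B_{i_k}\cap A_N)}{\mu(B_{i_k})}\longrightarrow 1\quad\text{as }k\to\infty.\]
By the crucial feature noted above, $B_{i_k}\cap A_N\subseteq V_{i_k}$, so $\mu(V_{i_k})/\mu(B_{i_k})\to 1$, contradicting $\mu(V_{i_k})/\mu(B_{i_k})\le 1-c_2<1$; this forces $\mu(\cB\setminus\cU)=0$.

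The only genuine obstacle is making the density step rigorous in the metric measure setting: one needs the Lebesgue differentiation theorem for arbitrary $\mu$-measurable sets, and a uniform comparison between the measures of balls $B(y,r)$ and $B(x_0,2r)$ whose centres are distinct. Both tools are standard consequences of $\mu$ being doubling, which is exactly what $\delta$-Ahlfors regularity supplies; this is the point at which the hypothesis on the measure is genuinely used, and it is otherwise routine bookkeeping.
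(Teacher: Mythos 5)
Your proof is correct, and it follows the standard density-point route that such Cassels-type scaling lemmas take (and which is, up to cosmetic choices, the argument in the cited source \cite[Lemma 5.3]{BDGW23}). The stratification $A_{N}=\cB\setminus\bigcup_{i\geq N}U_{i}$, the Lebesgue differentiation theorem for the doubling measure $\mu$, the comparison $\mu(B(x_{0},2r(B_{i_{k}})))\asymp\mu(B_{i_{k}})$ coming from Ahlfors regularity, and the observation that $B_{i_{k}}\cap A_{N}\subseteq B_{i_{k}}\setminus U_{i_{k}}$ for $i_{k}\geq N$ are exactly the right ingredients, and together they contradict the uniform mass deficit $\mu(B_{i}\setminus U_{i})\leq(1-c_{2})\mu(B_{i})$. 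One equivalent formulation, which some sources prefer, replaces the density point by a direct Vitali $5r$-covering argument: cover $A_{N}$ by balls $B_{i}\subseteq O$ (with $O$ open and $\mu(O)<\mu(A_{N})+\varepsilon$, $i\geq N$), extract a disjoint subfamily covering $A_{N}$ up to measure zero, and use $\mu(A_{N}\cap B_{i_{k}})\leq(1-c_{2})\mu(B_{i_{k}})$ summed over the subfamily to conclude $\mu(A_{N})\leq(1-c_{2})(\mu(A_{N})+\varepsilon)$; this is the same estimate you obtain in the limit at a density point, so the two routes are interchangeable.
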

In \cite{BDGW23} the above Lemma is relaxed somewhat. Namely rather than a Ahlfors regular measure one can consider a $\sigma$-finite Borel regular measure providing some additional conditions are met.\par


Combining these results we can prove the following modified Mass Transference Principle.
\begin{theorem} \label{new}
Let $(X,d,\mu)$ be as above and let $(B_{i})_{i\geq1}$ be a sequence of balls $B_{i}=B(x_{i},r_{i})$ with each $x_{i} \in X$ and radius $r_{i}\to 0$ as $i \to \infty$. Let $f$ be a dimension function such that $r^{-\delta}f(r)$ is monotonic. Suppose that for any ball $B\subseteq X$
\begin{equation} \label{measure}
\mu\left(B\cap \limsup_{i \to \infty} B_{i}^{f}\right)=\mu(B),
\end{equation}
and suppose $(\gamma_{i})_{i\geq1}$ is a sequence such that $x_{i} + \gamma_{i}\in X$ and
\begin{equation} \label{shift}
\limsup_{i\to \infty} \frac{d(0,\gamma_{i})}{f(r_{i})^{\tfrac{1}{\delta}}}<\infty\, .
\end{equation}
Then for any ball $B\subseteq X$
\begin{equation*}
\cH^{f}\left(B\cap \limsup_{i\to \infty} (B_{i}+\gamma_{i})\right)=\cH^{f}(B).
\end{equation*}
\end{theorem}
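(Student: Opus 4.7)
The plan is to reduce Theorem~\ref{new} to the standard Mass Transference Principle (Theorem~\ref{MTP}) applied to the shifted balls $B_i+\gamma_i$. Since these shifted balls still have radii $r_i\to 0$, the only nontrivial hypothesis to verify is the full-measure condition
\begin{equation*}
\mu\Bigl(B\cap \limsup_{i\to\infty} (B_i+\gamma_i)^f\Bigr)=\mu(B)
\end{equation*}
for every ball $B\subseteq X$, where $(B_i+\gamma_i)^f=B(x_i+\gamma_i,f(r_i)^{1/\delta})$. The given hypothesis~\eqref{measure} is the same statement with $(B_i+\gamma_i)^f$ replaced by $B_i^f=B(x_i,f(r_i)^{1/\delta})$, so the crux of the argument is to transfer the full-measure property from the unshifted blow-ups to the shifted ones.

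By hypothesis~\eqref{shift}, there exists $K>0$ such that $d(0,\gamma_i)\leq K f(r_i)^{1/\delta}$ for all sufficiently large $i$. Using translation invariance of $d$ together with the triangle inequality, any $y\in B_i^f$ satisfies $d(y,x_i+\gamma_i)\leq f(r_i)^{1/\delta}+d(0,\gamma_i)\leq (K+1)f(r_i)^{1/\delta}$, so
\begin{equation*}
B_i^f\subseteq \widetilde B_i:=B\bigl(x_i+\gamma_i,\,(K+1)f(r_i)^{1/\delta}\bigr)
\end{equation*}
for all large $i$. Thus $\limsup_i B_i^f\subseteq \limsup_i \widetilde B_i$, and combining this with~\eqref{measure} gives $\mu(B\cap \limsup_i\widetilde B_i)=\mu(B)$ for every ball $B\subseteq X$.

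Now the sub-balls $U_i:=(B_i+\gamma_i)^f\subseteq \widetilde B_i$ have, by $\delta$-Ahlfors regularity of $\mu$ (here using $x_i+\gamma_i\in X$ and $f(r_i)\to 0$),
\begin{equation*}
\mu(U_i)\asymp f(r_i)\asymp (K+1)^{-\delta}\mu(\widetilde B_i),
\end{equation*}
so the uniform proportionality hypothesis of Lemma~\ref{CI_balls} is satisfied. Applying that lemma yields
\begin{equation*}
\mu\Bigl(\limsup_{i\to\infty}\widetilde B_i\,\setminus \,\limsup_{i\to\infty}(B_i+\gamma_i)^f\Bigr)=0,
\end{equation*}
and hence $\mu(B\cap \limsup_i(B_i+\gamma_i)^f)=\mu(B)$ for every ball $B\subseteq X$. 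A final application of Theorem~\ref{MTP} to the sequence $(B_i+\gamma_i)$ with dimension function $f$ then delivers the desired conclusion. The substantive step is the radius inflation from $f(r_i)^{1/\delta}$ to $(K+1)f(r_i)^{1/\delta}$ when we recentre the blown-up balls: this is precisely where hypothesis~\eqref{shift} enters quantitatively, and where Ahlfors regularity of $\mu$ absorbs the multiplicative loss in a way that keeps Lemma~\ref{CI_balls} applicable.
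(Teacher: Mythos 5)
Your proposal is correct and follows essentially the same route as the paper: use the shift hypothesis and the triangle inequality to trap the (shifted and unshifted) blown-up balls inside a common enclosing ball of comparable radius, invoke Lemma~\ref{CI_balls} (with Ahlfors regularity supplying the measure-proportionality hypothesis) to transfer the full-measure property from $\limsup B_i^f$ to $\limsup (B_i+\gamma_i)^f$, and then apply Theorem~\ref{MTP} to the shifted sequence. The only cosmetic difference is that you centre the enclosing ball $\widetilde B_i$ at the shifted point $x_i+\gamma_i$ and use one application of Lemma~\ref{CI_balls} together with the inclusion $B_i^f\subseteq\widetilde B_i$, whereas the paper centres the enclosing ball at $x_i$ and chains two applications of the lemma; both yield identical conclusions.
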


\begin{proof}
By \eqref{shift} and the triangle inequality, there exists some constant $k>0$ such that for all sufficiently large $i\geq1$ we have that
$(k+1)B_{i}^{f}\supseteq (B_{i}^{f}+\gamma_{i})$, and by the doubling property of $\mu$ and the fact that $x_i +\gamma_i \in X$, we have that $\mu((k+1)B_{i}^{f})\leq k' \mu(B_{i}^{f}+\gamma_{i})$. By repeatedly applying Lemma~\ref{CI_balls}, to the pairs of sequences of balls $((B_{i}^{f}+\gamma_{i}),kB_{i}^{f})_{i\geq 1}$ and then $(kB_{i}^{f},B_{i}^{f})_{i\geq1}$ we have the following chain of equalities
\begin{equation*}
\mu\left(\limsup_{i\to \infty}(B_{i}^{f}+\gamma_{i})\right)=\mu\left(\limsup_{i\to \infty} kB_{i}^{f}\right)=\mu\left(\limsup_{i\to \infty}B_{i}^{f}\right).
\end{equation*}
Hence \eqref{measure} implies that for any ball $B\subseteq X$
\begin{equation*}
\mu\left(B\cap \limsup_{i\to \infty} (B_{i}^{f}+\gamma_{i})\right)=\mu(B).
\end{equation*}
Applying Theorem~\ref{MTP} to the above equation immediately gives us our result.
\end{proof}

Armed with Theorem~\ref{new} we may now prove the theorem on perturbed approximation.

\begin{proof}[Proof of Theorem~\ref{perturbed statement}]
A straightforward covering argument provides the convergence case of Theorem~\ref{perturbed statement} via the Hausdorff-Cantelli Lemma. \par 
For the divergence case recall that Khintchine's Theorem \cite{K24} tells us that for any monotonic decreasing approximation function $\theta:\N\to \R_{+}$ with $\theta(q)\to 0$ as $q\to \infty$
\begin{equation*}
    \lambda_{n}\left(W_{n}(\theta)\right)=1\quad \text{ if } \, \sum_{q=1}^{\infty}q^{n}\left(\frac{\theta(q)}{q}\right)^{n}=\infty\, .
\end{equation*}
Taking $\tfrac{\theta(q)}{q}=f\left(\tfrac{\psi(q)}{q}\right)^{\tfrac{1}{n}}$ then since we are considering the divergence case we have that for any ball $B\subseteq [0,1]^{n}$
\begin{equation*}
\lambda_{n}\left(B\cap W_{n}\left(f\left(\tfrac{\psi(q)}{q}\right)^{\tfrac{1}{n}}\right)\right)=\lambda_{n}(B)
\end{equation*}
and so condition \eqref{measure} is satisfied. Clearly assumption \eqref{kick size} implies condition \eqref{shift} and so we may apply Theorem~\ref{new} to deduce that for any ball $B\subseteq [0,1]^{n}$
\begin{equation*}
    \cH^{f}\left(B\cap W_{\bgam}(\psi)\right)=\cH^{f}(B) \quad \text{ if } \, \sum_{q=1}^{\infty}q^{n}f\left(\tfrac{\psi(q)}{q}\right) = \infty \, 
\end{equation*}
completing the proof.
\end{proof}
We note here that Theorem~\ref{new} can be used for far more general results. In particular, one could look at perturbed approximation by reduced fractions, that is,  a Duffin-Schaeffer type result. The above argument would clearly translate by replacing Khintchine's Theorem with the theorem of Koukoulopoulos and Maynard~\cite{KM20}.

\subsection{A shifted mass transference principle for rectangles}

Analogous to the previous section we can develop a similar statement to Theorem~\ref{new} for $\limsup$ sets of hyperrectangles. For a mass Transference Principle from rectangles to rectangles, we recall the following Theorem of Wang and Wu \cite{WW19}.

\begin{theorem}[{\cite[Theorem 3.4 + Proposition 3.1]{WW19}}] \label{MTPRR}
Let $(X,d,\mu)$ be as above and let $(r_{i})_{i\geq 1}$ be a sequence of positive real numbers with $r_{i}\to 0$ as $i\to \infty$ and $(\bx_{i})_{i\geq 1}$ a countable collection of points in $X$. Let $\bfa=(a_{1},\dots,a_{n})\in~\R^{n}_{+}$ and suppose that
\begin{equation*}
    \mu\left( \limsup_{i\to \infty} R(\bx_{i},r_{i}^{\bfa}) \right)=\mu(X)\, .
\end{equation*}
Then, for any $\bft=(t_{1},\dots, t_{n})\in\R^{n}_{+}$,
\begin{equation*}
\dimh \limsup_{i\to \infty}R\left(\bx_{i},r_{i}^{\bfa+\bft}\right) \geq \min_{A_{i} \in A} \left\{ \sum\limits_{j \in \cK_{1}} \delta_{j}+ \sum\limits_{j \in \cK_{2}} \delta_{j}+ \kappa \sum\limits_{j \in \cK_{3}} \delta_{j}+(1-\kappa) \frac{\sum\limits_{j \in \cK_{3}}a_{j}\delta_{j}-\sum\limits_{j \in \cK_{2}}t_{j}\delta_{j}}{A_{i}} \right\},
\end{equation*}
where $A=\{ a_{i}+t_{i} , 1 \leq i \leq n \}$ and $\cK_{1},\cK_{2},\cK_{3}$ are a partition of $\{1, \dots, n\}$ defined as
\begin{equation*}
 \cK_{1}=\{ j:a_{j} \geq A_{i}\}, \quad \cK_{2}=\{j: a_{j}+t_{j} \leq A_{i} \} \backslash \cK_{1}, \quad \cK_{3}=\{1, \dots n\} \backslash (\cK_{1} \cup \cK_{2}).
 \end{equation*}
 \end{theorem}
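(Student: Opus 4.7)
The plan is to reduce this rectangle-to-rectangle transfer to a Cantor-set construction of prescribed Hausdorff dimension and apply the mass distribution principle. Since Theorem~\ref{MTPRR} is a quoted result of Wang and Wu, I will outline the strategy one would follow rather than recheck the full combinatorics. First, I would upgrade the full-measure hypothesis on $\limsup_i R(\bx_i,r_i^{\bfa})$ to a \emph{local ubiquity} statement at geometric scales $\rho_k=\rho^k$ for some fixed $\rho\in(0,1)$: in any ball $B\subseteq X$, a definite proportion of $B$ is covered by a disjoint subcollection of rectangles $R(\bx_i,r_i^{\bfa})$ with $r_i$ comparable to $\rho_k$. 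This is standard, obtained via a Vitali-type covering together with a Borel--Cantelli extraction isolating one dyadic scale per level.

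Second, I would fix a candidate exponent $A_i\in A=\{a_j+t_j : 1\le j\le n\}$ and build a Cantor subset $\cF\subseteq \limsup_i R(\bx_i,r_i^{\bfa+\bft})$ inductively: at level $k$, inside each selected big rectangle $R(\bx,\rho_k^{\bfa})$, one places a maximal collection of essentially disjoint small rectangles $R(\bx_j,\rho_{k+1}^{\bfa+\bft})$ centred at ubiquity points. The three-way partition $\cK_1,\cK_2,\cK_3$ emerges naturally from the direction-by-direction comparison of $a_j$ and $a_j+t_j$ with the threshold $A_i$: coordinates in $\cK_1$ are already short enough and contribute the full Ahlfors exponent $\delta_j$; coordinates in $\cK_2$ are comparatively long, contributing $\delta_j$ with a scale-correction proportional to $t_j\delta_j/A_i$; coordinates in $\cK_3$ straddle $A_i$ and are handled by a convex combination parametrised by $\kappa\in[0,1]$ interpolating between these two extreme behaviours.

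Third, I would distribute a probability measure $\nu$ uniformly across children at each level of the Cantor construction and verify the local scaling bound $\nu(B(x,r))\lesssim r^s$, where $s$ equals the bracketed expression in the displayed inequality for this choice of $A_i$. The mass distribution principle (see \cite{F14}) then yields $\dimh\cF\geq s$; taking the minimum over $A_i\in A$ gives the claimed bound. Note that this strategy parallels the passage from Theorem~\ref{MTP} to Theorem~\ref{new} in spirit: the full-measure input feeds a ubiquity statement, which in turn seeds a Cantor construction whose Frostman measure is read off via a scale-matching computation.

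The main obstacle is producing a uniform scaling estimate for $\nu$ across the mixed regime $\cK_3$. In directions $j$ where $a_j<A_i<a_j+t_j$, the level-$k$ rectangle's side-length in coordinate $j$ is neither shorter nor longer than the radius $r\asymp \rho_k^{A_i}$ of a generic test ball, so the count of children meeting $B(x,r)$ depends sensitively on the alignment of $B(x,r)$ with the coordinate axes and on the exact ratio $r/\rho_k^{A_i}$. Reconciling these counts into a single exponent $s$ valid uniformly in $r$ is precisely what forces the convex-combination term $\kappa\sum_{j\in\cK_3}\delta_j+(1-\kappa)\sum_{j\in\cK_3}a_j\delta_j/A_i$, with $\kappa$ chosen to balance the two alignment extremes. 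This balancing is the technical heart of the Wang--Wu argument and, in my proposal, would be the step requiring the most care.
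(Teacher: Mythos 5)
This statement is quoted verbatim from Wang and Wu (cited as~\cite{WW19}) and the paper supplies \emph{no proof} of it: it is imported as a black box and used downstream in Theorem~\ref{shifted mtprr}. There is therefore no internal argument in the paper against which your sketch can be checked; the appropriate comparison is with Wang and Wu's original.

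As an account of the Wang--Wu strategy, your sketch has the right shape. Their proof does pass through a local ubiquity statement (a full-measure $\limsup$ of rectangles at scales $r_i^{\bfa}$ yields, after a $5r$-covering and a counting argument, a positive-proportion disjoint subfamily at each dyadic scale), and then builds a nested Cantor set inside $\limsup R(\bx_i,r_i^{\bfa+\bft})$ on which a Frostman measure is constructed; the partition $\cK_1,\cK_2,\cK_3$ arises exactly as you say, from comparing, coordinate by coordinate, the side-lengths $r^{a_j}$ and $r^{a_j+t_j}$ with a test-ball radius of order $r^{A_i}$. So the skeleton is correct.

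Two caveats. First, your treatment of $\kappa$ as a free ``convex-combination parameter \ldots chosen to balance the two alignment extremes'' is an invention rather than a faithful reading of the source. In fact $\kappa$ is never defined in the paper's statement of Theorem~\ref{MTPRR} --- this is an omission in the paper itself --- and in the Wang--Wu formulation the corresponding quantity is fixed by the structure of the ambient measure and the indexing of exponents, not optimised over. Without pinning down what $\kappa$ is, the bracketed expression is not even well defined, so the claim that your Cantor construction would produce it cannot be verified. Second, and as you concede, the sketch stops short of the decisive step: the uniform Frostman estimate $\nu(B(x,r))\lesssim r^s$ across all scales $r$, including the mixed $\cK_3$ regime and the transition scales between levels of the Cantor set. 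That estimate \emph{is} the proof, and in Wang--Wu it occupies the bulk of Sections~3--5 with a delicate case analysis. Gesturing at ``the technical heart'' without carrying it out means the proposal, while strategically sound, does not constitute a proof.

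In short: the paper has no proof to compare against; your outline correctly identifies the ubiquity/Cantor/mass-distribution pipeline that Wang and Wu use; but the $\kappa$ in the displayed formula is unexplained (both in the paper and in your proposal), and the Frostman estimate --- the content of the theorem --- is asserted rather than established.
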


We need the following generalisation, for the analogous result of Cassel's Scaling Lemma, for $\limsup$ sets of hyperrectangles. The proof of the following statement follows essentially \cite[Lemma 5.7]{BDGW23} that we include for completeness.

\begin{lemma} \label{CI_product}
Let $(X,d,\mu)$ be as above and let $(\br_{i})_{i\geq 1}$ be a sequence of $n$-dimensional positive real numbers with $\|\br_{i}\|\to 0$ as $i\to \infty$. Let $(\bx_{i})_{i\geq 1}$ be a countable collection of points in $X$. Let $(U_{i})_{i\geq 1}$ be a sequence of sets of the form $U_{i}=\prod_{j=1}^{n}U^{(j)}_{i}$ with each $U^{(j)}_{i}$ $\mu_{j}$-measurable. Suppose that $U_{i}\subseteq R(\bx_{i},\br_{i})$ and that there exists constant $c>0$ such that
\begin{equation} \label{CI_product condition}
    \mu_{j}\left(B^{(j)}(x_{i,j},r_{i,j})\right) \leq c \mu_{j}\left(U^{(j)}_{i}\right)\, \quad \text{ for each } j\in \{1,\dots,n\} \, \text{ and for all } i\in \N\, .
\end{equation}
Then
\begin{equation*}
\mu\left( \limsup_{i \to \infty} R(\bx_{i}, \br_{i}) \right)=\mu\left(\limsup_{i \to \infty} U_{i} \right)\,.
\end{equation*}
\end{lemma}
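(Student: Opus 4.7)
My strategy is to bridge between $R(\bx_i,\br_i)$ and $U_i$ by swapping the $k$-th factor from $B^{(k)}(x_{i,k}, r_{i,k})$ to $U_i^{(k)}$ one coordinate at a time, reducing each swap to the one-dimensional Cassel Scaling Lemma (Lemma~\ref{CI_balls}) applied fiberwise. The inclusion $U_i \subseteq R(\bx_i,\br_i)$ immediately gives $\mu(\limsup_i U_i) \le \mu(\limsup_i R(\bx_i,\br_i))$, so I only need the reverse inequality.

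For $j \in \{0, 1, \dots, n\}$ define the interpolating sets
\begin{equation*}
W_i^{(j)} := \prod_{k \le j} U_i^{(k)} \ \times\ \prod_{k > j} B^{(k)}(x_{i,k}, r_{i,k}),
\end{equation*}
so $W_i^{(0)} = R(\bx_i, \br_i)$ and $W_i^{(n)} = U_i$. It suffices to show $\mu(\limsup_i W_i^{(j-1)}) = \mu(\limsup_i W_i^{(j)})$ for each $j$ and chain the equalities. Fix $j$, set $X' = \prod_{k \ne j} X_k$, $\mu' = \prod_{k \ne j} \mu_k$, and for $\bx' \in X'$ put
\begin{equation*}
I(\bx') := \{ i : x'_k \in U_i^{(k)} \text{ for } k<j,\ x'_k \in B^{(k)}(x_{i,k}, r_{i,k}) \text{ for } k>j \}.
\end{equation*}
The key observation is that $W_i^{(j-1)}$ and $W_i^{(j)}$ differ only in the $j$-th factor: the fiber at $\bx'$ of $\limsup_i W_i^{(j-1)}$ is exactly $\limsup_{i \in I(\bx')} B^{(j)}(x_{i,j}, r_{i,j})$, while the fiber of $\limsup_i W_i^{(j)}$ at $\bx'$ is $\limsup_{i \in I(\bx')} U_i^{(j)}$, and crucially the filtering index set $I(\bx')$ is the same on the two sides.

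When $I(\bx')$ is finite both fibers are empty; otherwise $r_{i,j} \to 0$ along $I(\bx')$ (because $\|\br_i\| \to 0$) and the hypothesis~\eqref{CI_product condition} supplies $\mu_j(U_i^{(j)}) \ge c^{-1}\mu_j(B^{(j)}(x_{i,j},r_{i,j}))$ for every $i$. Therefore Lemma~\ref{CI_balls}, applied in the Ahlfors regular space $(X_j, d_j, \mu_j)$ to the pair of subsequences indexed by $I(\bx')$, yields equality of the fiberwise $\mu_j$-measures. Fubini's theorem, integrating this fiberwise equality over $\bx' \in X'$ against $\mu'$, delivers $\mu(\limsup_i W_i^{(j-1)}) = \mu(\limsup_i W_i^{(j)})$ and hence the lemma. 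The main obstacle is the bookkeeping required to verify that both limsup fibers really are filtered by the same $\bx'$-dependent subsequence; once this observation is in place, Lemma~\ref{CI_balls} is invoked cleanly on a common index set inside a single fiber.
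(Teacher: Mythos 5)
Your proof is correct and follows essentially the same route as the paper: the paper's proof (written out for $n=2$ with the remark that it iterates) swaps one coordinate factor at a time, and within each swap reduces to Lemma~\ref{CI_balls} fiberwise via the same $\bx'$-dependent index set followed by Fubini. Your explicit introduction of the interpolating sets $W_i^{(j)}$ and the general-$n$ bookkeeping for $I(\bx')$ is merely a cleaner write-up of what the paper leaves to the reader.
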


\begin{proof}
For ease of notation suppose we are in a two-dimensional product space ($n=2$). It will become clear that the argument generalises to $n$-dimensional product space. 
Let us begin by showing that
\begin{equation*}
    \mu\left(\limsup_{i\to \infty} R(\bx_i,\br_i) \right)=\mu\left(\limsup_{i\to \infty} U_{i}^{(1)}\times B^{(2)}(x_{i,2},r_{i,2})\right)\, .
\end{equation*}
For any $y \in X_{2}$ let
\begin{equation*}
I_{y}= \left\{ i\in\N: \, \exists \, \, x \in X_{1} \;\;\text{such that }\, \, (x,y) \in R(\bx_i,br_i) \right\},
\end{equation*}
and for any subset $F \subseteq X$ let $F_y$ denote the fiber of $F$ at $y$, that is
\begin{equation*}
F_y=\{ x: (x,y) \in F\} \subseteq X_{1}.
\end{equation*}
Observe that
\begin{equation}\label{same1}
A_y:=\left( \limsup_{i \to \infty} \, R(\bx_i,br_i) \right)_{y}=\underset{i \in I_y}{\limsup_{i \to \infty}}\, R(\bx_i,br_i)_{y}=\underset{i \in I_y}{\limsup_{i \to \infty}}\, B^{(1)}(x_{i,1},r_{i,1}) =:B(y).
\end{equation}
Indeed, if $x \in A_y$ then it implies there exists an infinite sequence $\{i_{k}\}$ such that
\begin{equation*}
(x,y) \in R(\bx_{i_k},\br_{i_k}) \quad \text{ for all } i_k.
\end{equation*}
Hence $\{i_k\} \subseteq I_y$ and so $x \in B(y)$.

Conversely, if $x \in B(y)$ then $B(y)$ is non-empty and so $I_y$ must be infinite. By the definition of $I_y$ and the fact that $x \in B(y)$, we have that $x \in R(\bx_i,\br_i)_{y}$ for infinitely many $i \in I_y$, and so $x \in A_y$. \par

Similarly, we have that
\begin{equation}\label{same2}
C_y:= \left( \limsup_{i \to \infty} U_{i}^{(1)}\times B^{(2)}(x_{i,2},r_{i,2}) \right)_{y}=\underset{i \in I_y}{\limsup_{i \to \infty}} \left(U_{i}^{(1)}\times B^{(2)}(x_{i,2},r_{i,2})\right)_{y}=\underset{i \in I_y}{\limsup_{i \to \infty}} U_{i}^{(1)} =: D(y).
\end{equation}

By Lemma~\ref{CI_balls} we get that
\begin{equation} \label{same3}
\mu_{1}(B(y))=\mu_{1}(D(y))\, .
\end{equation}

Applying Fubini's Theorem we have that
\begin{align*}
\mu \left( \limsup_{i \to \infty} R(\bx_{i},\delta_{i}) \right) & = \int_{X_{2}} \mu_{1} \left( A_y \right) d \mu_{2} \stackrel{\eqref{same1}}{=} \int_{X_{2}} \mu_{1} \left( B(y) \right) d \mu_{2} \overset{\eqref{same3}}{=} \int_{X_{2}} \mu_{1} \left( D(y)  \right) d \mu_{2}  \stackrel{\eqref{same2}}{=} \int_{X_{2}} \mu_{1} \left( C_y \right) d \mu_{2} \\
&=\mu\left(\limsup_{i\to \infty} U_{i}^{(1)}\times B^{(2)}(x_{i,2},r_{i,2})\right)\,.
\end{align*}
Clearly, the above method can be repeated in the second product space to obtain the desired result.
\end{proof}

Armed with the mass transference principle from rectangles to rectangles and Lemma~\ref{CI_product} we can prove the following:

\begin{theorem} \label{shifted mtprr}
Let $(X,d,\mu)$ be as above and let $(r_{i})_{i\geq 1}$ be a sequence of positive real numbers with $r_{i}\to 0$ as $i\to \infty$ and $(\bx_{i})_{i\geq 1}$ a countable collection of points $\bx_{i}=(x_{i}^{(1)},\dots, x_{i}^{(n)}) \in X$. Let $\bfa=(a_{1},\dots,a_{n})\in\R^{n}_{+}$ and suppose that
\begin{equation} \label{full measure rectangle}
    \mu\left( \limsup_{i\to \infty} R(\bx_{i},r_{i}^{\bfa}) \right)=\mu(X)\, .
\end{equation}
Let $(\mathbf{\gamma_{i}})_{i\geq 1}$ be a sequence of $n$-tuples $\mathbf{\gamma_{i}}=(\gamma_{i}^{(1)},\dots, \gamma_{i}^{(n)})$ such that $x_{i}^{(j)}+\gamma_{i}^{(j)} \in X_{j}$ for each $j=1,\dots,n$, and 
\begin{equation} \label{shift rectangle}
    \limsup_{i\to \infty}\, d_{j}\left(0,\gamma_{i}^{(j)}\right)\, r_{i}^{-a_{j}}< \infty \quad \text{ for } \, j=1,\dots, n \, .
\end{equation}
Then, for any $\bft=(t_{1},\dots, t_{n})\in\R^{n}_{+}$,
\begin{equation*}
\dimh \limsup_{i\to \infty}R\left(\bx_{i}+\mathbf{\gamma_{i}},r_{i}^{\bfa+\bft}\right) \geq \min_{A_{i} \in A} \left\{ \sum\limits_{j \in \cK_{1}} \delta_{j}+ \sum\limits_{j \in \cK_{2}} \delta_{j}+ \kappa \sum\limits_{j \in \cK_{3}} \delta_{j}+(1-\kappa) \frac{\sum\limits_{j \in \cK_{3}}a_{j}\delta_{j}-\sum\limits_{j \in \cK_{2}}t_{j}\delta_{j}}{A_{i}} \right\},
\end{equation*}
where $A=\{ a_{i}+t_{i} , 1 \leq i \leq n \}$ and $\cK_{1},\cK_{2},\cK_{3}$ are a partition of $\{1, \dots, n\}$ defined as in Theorem~\ref{MTPRR}.

\end{theorem}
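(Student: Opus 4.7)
The plan is to mirror the proof of Theorem~\ref{new} but replacing the ball-version ingredients with their rectangle analogues. That is, we first use Lemma~\ref{CI_product} (the rectangle version of Cassel's scaling lemma) to show that shifting the centres of the $R(\bx_i, r_i^{\bfa})$ by $\gamma_i$ does not change the $\mu$-measure of the resulting $\limsup$ set, and then we apply Theorem~\ref{MTPRR} directly to the shifted rectangles.

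To make the first step precise, condition \eqref{shift rectangle} lets us pick a constant $k>0$ such that $d_j(0,\gamma_i^{(j)}) \le k\, r_i^{a_j}$ for every $j=1,\dots,n$ and all sufficiently large $i$. The triangle inequality then gives the inclusion
\begin{equation*}
R(\bx_i+\gamma_i, r_i^{\bfa}) \;\subseteq\; R(\bx_i, (k+1)r_i^{\bfa}),
\end{equation*}
where $(k+1)r_i^{\bfa}$ denotes the side lengths scaled by $k+1$. Since each $\mu_j$ is $\delta_j$-Ahlfors regular it is doubling, so for each coordinate $j$,
\begin{equation*}
\mu_j\bigl(B^{(j)}(x_i^{(j)},(k+1)r_i^{a_j})\bigr) \;\asymp\; r_i^{a_j\delta_j} \;\asymp\; \mu_j\bigl(B^{(j)}(x_i^{(j)}+\gamma_i^{(j)}, r_i^{a_j})\bigr),
\end{equation*}
which verifies the hypothesis \eqref{CI_product condition} of Lemma~\ref{CI_product} for the pair of sequences $\bigl(R(\bx_i,(k+1)r_i^{\bfa}),\, R(\bx_i+\gamma_i, r_i^{\bfa})\bigr)$. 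Applying Lemma~\ref{CI_product} once to this pair, and a second time to the pair $\bigl(R(\bx_i,(k+1)r_i^{\bfa}),\, R(\bx_i, r_i^{\bfa})\bigr)$ (where the same doubling estimate applies trivially), yields the chain of equalities
\begin{equation*}
\mu\Bigl(\limsup_{i\to\infty} R(\bx_i+\gamma_i, r_i^{\bfa})\Bigr) = \mu\Bigl(\limsup_{i\to\infty} R(\bx_i,(k+1)r_i^{\bfa})\Bigr) = \mu\Bigl(\limsup_{i\to\infty} R(\bx_i, r_i^{\bfa})\Bigr).
\end{equation*}
Combined with the full-measure hypothesis \eqref{full measure rectangle}, this gives $\mu\bigl(\limsup_{i\to\infty} R(\bx_i+\gamma_i, r_i^{\bfa})\bigr)=\mu(X)$.

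With this full-measure statement now in hand for the shifted rectangles, we simply apply Theorem~\ref{MTPRR} to the sequence $\bigl(R(\bx_i+\gamma_i, r_i^{\bfa})\bigr)_{i\ge 1}$ with the given tuple $\bft$. This produces precisely the claimed lower bound for $\dimh \limsup_{i\to\infty} R(\bx_i+\gamma_i, r_i^{\bfa+\bft})$. The only point requiring care is the double application of Lemma~\ref{CI_product}: one must verify that the shifted rectangle $R(\bx_i+\gamma_i, r_i^{\bfa})$ still fits inside the inflated centred rectangle $R(\bx_i,(k+1)r_i^{\bfa})$ so that the lemma is legitimately applicable, but this is exactly what the triangle inequality plus our choice of $k$ guarantees. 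Apart from this, the argument is a mechanical transcription of the proof of Theorem~\ref{new} into the rectangular setting.
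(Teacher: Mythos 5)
Your proof is correct and follows essentially the same strategy as the paper's: invoke the shift bound and the triangle inequality to squeeze the shifted rectangle inside an inflated centred rectangle, use Ahlfors regularity to verify the measure comparability hypothesis of Lemma~\ref{CI_product}, conclude that the shifted $\limsup$ set has full measure, and then apply Theorem~\ref{MTPRR}. If anything, you are slightly more careful than the paper in making explicit the two separate applications of Lemma~\ref{CI_product} (first $(k+1)R(\bx_i,r_i^{\bfa})$ versus $R(\bx_i+\gamma_i, r_i^{\bfa})$, then $(k+1)R(\bx_i,r_i^{\bfa})$ versus $R(\bx_i, r_i^{\bfa})$), which the paper compresses into a single sentence.
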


\begin{remark}\rm
 In the balls to rectangles case of the above theorem i.e. when $\bfa=(a,\dots,a)$, our result can essentially be deduced from \cite[Theorem 3.1]{KR21}, since one can take the sequence of sets $(E_{i})$ appearing in their theorem to be rectangles with centre not necessarily equal to the center of $B_{i}$. Our result is a generalisation since we can go from rectangles to rectangles. Furthermore Theorem~\ref{new} does not follow from \cite{KR21} since they only provide a lower bound dimension result rather than the Hausdorff measure statement.
\end{remark}

The proof of Theorem~\ref{shifted mtprr} is essentially repeated verbatim from the proof of Theorem~\ref{new} with Lemma~\ref{CI_product} in place of Lemma~\ref{CI_balls} and Theorem~\ref{MTPRR} in place of Theorem~\ref{MTP}. We provide the proof for completeness.

\begin{proof}[Proof of Theorem~\ref{shifted mtprr}]
    By \eqref{shift rectangle} there exists a constant $k>0$ such that $d_{j}(0,\gamma_{i}^{(j)})<kr_{i}^{\tfrac{a_{j}s}{\delta}}$ for each $j=1\dots, n$ and for all $i\geq i_{0}$ where $i_{0}$ is chosen sufficiently large. Hence, by applying the triangle inequality to each coordinate of the product space we get that
    \begin{equation*}
        (k+1)R(\bx_{i},r_{i}^{\bfa}) \supseteq R(\bx_{i}+\mathbf{\gamma_{i}},r_{i}^{\bfa})
    \end{equation*}
    Since the measures $\mu_{j}$ are Ahlfors regular and $\bx_{i}+\mathbf{\gamma_{i}} \in X$ for each $i\geq 1$ we have that
    \begin{equation*}
        \mu_{j}\left((k+1)B^{(j)}(x_{i}^{(j)},r_{i}^{a_{j}})\right) \asymp  \mu_{j}\left(B^{(j)}(x_{i}^{(j)},r_{i}^{a_{j}})\right)\asymp \mu_{j}\left(B^{(j)}(x_{i}^{(j)}+\gamma_{i}^{(j)},r_{i}^{a_{j}})\right)\, ,
    \end{equation*}
    where the implied constants are independent of $i$. Thus condition \eqref{CI_product condition} of Lemma~\ref{CI_product} is satisfied and so by Lemma~\ref{CI_product} and \eqref{full measure rectangle} we have that $\mu\left(R(\bx_{i}+\mathbf{\gamma_{i}},r_{i}^{\bfa})\right)=\mu(X)$.
    Hence Theorem~\ref{MTPRR} is applicable and the dimension result follows.
\end{proof}

%
%
%
%


\section{Proof of Theorem~\ref{dim >2 generalised}}\label{main}

We begin with describing the following suitable subsets of $W_{n}(\Psi,\Phi)$ which brings into play the relationship between Diophantine approximation by annuli and the shifted mass transference principles. \par 
For each $j\in \{1,\dots,n\}$ and $(\bp,q)\in \Z^{n}\times\N$ define the hyperrectangles
\begin{equation*}
    R^{j,+}_{\bp,q}(\Psi,\Phi):=\left(\prod_{i=1}^{j-1}B\left(\frac{p_{i}}{q},\frac{\psi_{i}(q)}{q}\right) \right)\times B\left(\frac{p_{j}}{q}+\frac{(2-\phi_{j}(q))\psi_{j}(q)}{2q}, \frac{\phi_{j}(q)\psi_{j}(q)}{2q}\right)\times \left(\prod_{i=j+1}^{n}B\left(\frac{p_{i}}{q},\frac{\psi_{i}(q)}{q}\right) \right)\, ,
\end{equation*}
and
\begin{equation*}
    R^{j,-}_{\bp,q}(\Psi,\Phi):=\left(\prod_{i=1}^{j-1}B\left(\frac{p_{i}}{q},\frac{\psi_{i}(q)}{q}\right) \right)\times B\left(\frac{p_{j}}{q}-\frac{(2-\phi_{j}(q))\psi_{j}(q)}{2q}, \frac{\phi_{j}(q)\psi_{j}(q)}{2q}\right)\times \left(\prod_{i=j+1}^{n}B\left(\frac{p_{i}}{q},\frac{\psi_{i}(q)}{q}\right) \right)\, .
\end{equation*}
Let
\begin{equation*}
    R_{q}^{j,+}(\psi,\phi)=  \bigcup_{0\leq \|\bp\|\leq q}  R_{\bp,q}^{j,+}(\psi,\phi) \quad \text{ and similarly } \quad  R_{q}^{j,-}(\psi,\phi)=  \bigcup_{0\leq \|\bp\|\leq q}  R_{\bp,q}^{j,-}(\psi,\phi)\, .
\end{equation*}
Then define
\begin{equation*}
    \widetilde{W}^{j}_{n}(\Psi,\Phi):= \limsup_{q\to \infty} \, R_{q}^{j,+}(\Psi,\Phi)\, .
\end{equation*}
Notice $\widetilde{W}^{j}_{n}(\Psi,\Phi)$ can be seen as a $\limsup$ set of rectangles with their centres perturbed by $\tfrac{(2-\phi_{j}(q))\psi_{j}(q)}{2q}$ in the $j$th coordinate axis. We have the following lemma which essentially follows by using the triangle inequality.

\begin{lemma}\label{lebesgue subset} 
For each $j \in \{1,\dots,n\}$ we have that $\widetilde{W_{n}}^{j}(\Psi,\Phi)\subseteq W_{n}(\Psi,\Phi)$.
\end{lemma}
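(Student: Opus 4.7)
The plan is to verify the inclusion at the level of individual rectangles and annuli, then pass it through the $\bigcup_{\bp}$ and $\limsup_{q}$ operations (which is automatic). That is, for each fixed $j\in\{1,\dots,n\}$ and each $(\bp,q)\in\Z^{n}\times\N$, I would show
\[
R^{j,+}_{\bp,q}(\Psi,\Phi)\subseteq A_{\bp,q}(\Psi,\Phi),
\]
after which taking $\bigcup_{0\le\|\bp\|\le q}$ and $\limsup_{q\to\infty}$ yields the claim.

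The verification is a direct computation with the defining intervals. Take any $\bx\in R^{j,+}_{\bp,q}(\Psi,\Phi)$. For coordinates $i\neq j$ the definition gives $x_{i}\in B(p_{i}/q,\psi_{i}(q)/q)$, so $|x_{i}-p_{i}/q|<\psi_{i}(q)/q$, which handles the outer-box condition in these coordinates. For the $j$th coordinate, $x_{j}$ lies in the ball centred at $p_{j}/q+(2-\phi_{j}(q))\psi_{j}(q)/(2q)$ of radius $\phi_{j}(q)\psi_{j}(q)/(2q)$. A short arithmetic simplification shows this ball is exactly the open interval
\[
\Bigl(\tfrac{p_{j}}{q}+\bigl(1-\phi_{j}(q)\bigr)\tfrac{\psi_{j}(q)}{q},\ \tfrac{p_{j}}{q}+\tfrac{\psi_{j}(q)}{q}\Bigr),
\]
so $|x_{j}-p_{j}/q|<\psi_{j}(q)/q$ (completing the outer-box condition) and simultaneously $|x_{j}-p_{j}/q|>(1-\phi_{j}(q))\psi_{j}(q)/q$, which supplies the required index witnessing the inner-box exclusion. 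Hence $\bx$ lies in the outer box of $A_{\bp,q}(\Psi,\Phi)$ but not in its inner box, i.e.\ $\bx\in A_{\bp,q}(\Psi,\Phi)$.

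There is essentially no obstacle: the only thing to be careful about is correctly unfolding the centre and radius of the shifted $j$th coordinate ball to see that it sits inside the ``$+$'' half of the annulus's $j$th-coordinate slice. An entirely symmetric computation would show $R^{j,-}_{\bp,q}(\Psi,\Phi)\subseteq A_{\bp,q}(\Psi,\Phi)$ (covering the ``$-$'' half), which is not strictly needed for the statement of the lemma as written but is the reason both $R^{j,\pm}_{q}$ are defined for the later invocation of Theorem~\ref{shifted mtprr}. Because $\widetilde{W}^{j}_{n}(\Psi,\Phi)$ is just the $\limsup$ of unions of the sets $R^{j,+}_{\bp,q}(\Psi,\Phi)$, the established pointwise inclusion transfers verbatim to give $\widetilde{W}^{j}_{n}(\Psi,\Phi)\subseteq W_{n}(\Psi,\Phi)$.
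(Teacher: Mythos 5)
Your proof is correct and follows essentially the same route as the paper: both verify the inclusion at the level of individual sets $R^{j,+}_{\bp,q}(\Psi,\Phi)\subseteq A_{\bp,q}(\Psi,\Phi)$ and then pass to $\limsup$. The only cosmetic difference is that you unfold the shifted $j$th-coordinate ball into the explicit interval $\bigl(\tfrac{p_j}{q}+(1-\phi_j(q))\tfrac{\psi_j(q)}{q},\ \tfrac{p_j}{q}+\tfrac{\psi_j(q)}{q}\bigr)$ and read off the two inequalities, whereas the paper obtains the same two bounds via the triangle and reverse triangle inequalities; the computations are identical in substance.
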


\begin{proof}
    It is clear that in the $n-1$ coordinates of the product space, excluding $j$, the hyperrectangle $R^{j,+}_{\bp,q}(\Psi,\Phi)$ coincides with the annulus $A_{\bp,q}(\Psi,\Phi)$. To verify the $j$th coordinate axis observe that by the triangle inequality
\begin{align*}
    \left|x_{j}-\frac{p_{j}}{q}\right|&=\left| x_{j}-\left(\frac{p_{j}}{q}+\frac{(2-\phi_{j}(q))\psi_{j}(q)}{2q}\right)+\frac{(2-\phi_{j}(q))\psi_{j}(q)}{2q} \right|\\ &\leq  \frac{\phi_{j}(q)\psi_{j}(q)}{2q}+\frac{(2-\phi_{j}(q))\psi_{j}(q)}{2q}\\ &=\frac{\psi_{j}(q)}{q}\, .
\end{align*}
By the reverse triangle inequality
\begin{align*}
    \left| x_{j}-\frac{p_{j}}{q}\right| &\geq \left|\frac{(2-\phi_{j}(q))\psi_{j}(q)}{2q}\right|- \left| x_{j}-\left(\frac{p_{j}}{q}+\frac{(2-\phi_{j}(q))\psi_{j}(q)}{2q}\right)\right| \\ &\geq \frac{(2-\phi_{j}(q))\psi_{j}(q)}{2q}- \frac{\phi_{j}(q)\psi_{j}(q)}{2q}\\ &=\frac{(1-\phi_{j}(q))\psi_{j}(q)}{q}\, .
\end{align*}
Combining these, and trivially on the remaining $n-1$ coordinate axes, we conclude that if $\bx \in \widetilde{W_{n}}^{j}(\Psi,\Phi)$ then there exists infinitely many rational points $\frac{\bp}{q}$ such that
\begin{equation*}
    \left|x_{i}-\frac{p_{i}}{q}\right|< \frac{\psi_{i}(q)}{q}\, \quad i=1,\dots,n \quad \text{and} \quad \frac{(1-\phi_{j}(q))\psi_{j}(q)}{q}<\left|x_{j}-\frac{p_{j}}{q}\right|< \frac{\psi_{j}(q)}{q}\, \quad \text{ for some } j\in \{1,\dots,n\}\, ,
\end{equation*}
and so $\bx \in W_{n}(\Psi,\Phi)$.
\end{proof}

It should be clear from the above proof that we also have that 
\begin{equation*}
    A_{\bp,q}(\Psi,\Phi)=\bigcup_{j=1}^{n}\left(R^{j,+}_{\bp,q}(\Psi,\Phi)\cup R^{j,-}_{\bp,q}(\Psi,\Phi)\right).
\end{equation*}
This observation will be used when considering the upper bound Hausdorff dimension of Theorem~\ref{dim n>1}\,.

We split this proof into an upper and lower bound case.
\subsection*{Upper bound}
    For the upper bound we will use the previous observation that for each $(\bp,q)\in \Z^{n}$ the rectangular annulus can be sufficiently split, that is
    \begin{equation} \label{rectangular annulus split}
    A_{\bp,q}(\Psi,\Phi)=\bigcup_{j=1}^{n}\left(R^{j,+}_{\bp,q}(\Psi,\Phi)\cup R^{j,-}_{\bp,q}(\Psi,\Phi)\right)\, .
\end{equation}
We will cover each shifted hyperrectangle by balls of various sizes depending on their sidelengths. Recall the function
\begin{equation*}
        \tau_{k(j)}=\begin{cases}
            \tau_{\psi_j}+\tau_{\phi_j} \quad \text{ if } k(j)=j\, ,\\
            \tau_{\psi_{k(j)}}  \quad \qquad \text{ otherwise.}
        \end{cases}
    \end{equation*}
For each $k(j)\in \{1,\dots,n\}$ we will associate a specific covering of balls. Fix $k(j)\in \{1,\dots,n\}$. Using a standard geometric argument for each $(\bp,q)\in\Z^{n}\times \N$ and each $j\in \{1,\dots,n\}$ we may cover $R^{j,+}_{\bp,q}(\Psi,\Phi)$ by a number of balls comparable with 
\begin{equation*}
    \max\left\{1,\frac{q^{-\tau_{\psi_j}-\tau_{\phi_j}}}{q^{-\tau_{k(j)}}}\right\}\prod_{i=1, i\neq j}^{n} \max\left\{1, \tfrac{q^{-\tau_{\psi_i}}}{q^{-\tau_{k(j)}}} \right\}
\end{equation*}
    each of radius $q^{-(1+\tau_{k(j)})}$. The same can be seen for $R^{j,-}_{\bp,q}(\Psi,\Phi)$. Now, repeating the same argument in each $j\in\{1,\dots,n\}$, and noting \eqref{rectangular annulus split} we have that
    \begin{align*}
        \cH^{s}\left(W_{n}(\Psi,\Phi)\right) &\ll \lim_{N\to \infty} \sum_{q\geq N} q^{n} \left(2\sum_{j=1}^{n}\max\left\{1,\frac{q^{-\tau_{\psi_j}-\tau_{\phi_j}}}{q^{-\tau_{k(j)}}}\right\}\prod_{i=1, i\neq j}^{n} \max\left\{1, \tfrac{q^{-\tau_{\psi_i}}}{q^{-\tau_{k(j)}}} \right\} q^{-(1+\tau_{k(j)})s}\right) \\
        & \ll \lim_{N\to \infty} \sum_{j=1}^{n}\left(\sum_{q\geq N} q^{n+ \sum_{i\neq j : \tau_{\psi_i}<\tau_{k(j)}}(\tau_{k(j)}-\tau_{\psi_i})+\max\{0,\tau_{k(j)}-\tau_{\psi_j}-\tau_{\phi_j} \} -(1+\tau_{k(j)})s } \right)=0\, ,
    \end{align*}
    when
    \begin{equation*}
        s>\max_{j=1,\dots ,n}\,\, \min_{k(j)=1,\dots,n} \left\{ \frac{n+1+\sum_{i\neq j : \tau_{\psi_i}<\tau_{k(j)}}(\tau_{k(j)}-\tau_{\psi_i})+\max\{0,\tau_{k(j)}-\tau_{\psi_j}-\tau_{\phi_j}\}}{1+\tau_{k(j)}} \right\} \, .
    \end{equation*}
    The minimum over the choices of $k(j)\in \{1,\dots,n\}$ appears due to our choice to optimize the cover of $R_{q}^{j,+}(\Psi,\Phi)\cup R_{q}^{j,-}(\Psi,\Phi)$. That is, choose the cover with minimum Hausdorff $s$-cost. The maximum is taken to ensure that when we consider the collective cover by combining the covers for each $j$th coordinate product space the summations in the above Hausdorff $s$-measure calculation are convergent for each $j=1,\dots,n$, and so as $N\to \infty$ the total of the sums tends to zero. This completes the upper bound.

\subsection*{Lower bound} The lower bound is an application of Theorem~\ref{shifted mtprr}. For now consider the $\limsup$ set of hyperrectangles $\widetilde{W}_{n}^{j}(\Psi,\Phi)$. Firstly, by Minkowksi's Theorem for systems of linear forms, for any $n$-tuple $(b_{1},\dots,b_{n})\in\R^{n}_{+}$ satisfying $\sum_{i=1}^{n}b_{i}=1$, and $\Gamma(q)=(q^{-b_{1}},\dots, q^{-b_{n}})$, we have that
\begin{equation}\label{minkowksi full measure consequence}
\lambda_{n}(W_{n}(\Gamma))=\lambda_{n}([0,1]^{n})\, .    
\end{equation}
Set the $n$-tuple $\bfa$ appearing in Theorem~\ref{shifted mtprr} to be
\begin{equation*}
    a_{1}=1+b_{1}\, , \, \, a_{2}=1+b_{2}\, , \, \dots \, \, a_{n}=1+b_{n}\, . 
\end{equation*}
Without loss of generality suppose the exponents $\tau_{\psi_{1}},\dots, \tau_{\psi_{n}}$ are ordered so that $\tau_{\psi_{1}}\geq \dots \geq \tau_{\psi_{n}}>0$. Consider two cases.
\begin{itemize}
    \item $\tau_{\psi_i}\geq \tfrac{1}{n}$ for all $i=1,\dots,n$: Then pick $b_{1}=\dots=b_{n}=\tfrac{1}{n}$, and set
    \begin{equation*}
        a_{i}=1+\tfrac{1}{n}\, , \, \text{ and } \, t_{i}=\tau_{\psi_i}-\tfrac{1}{n}\geq 0 \, \quad i \in \{1,\dots, n\}\backslash \{j\}\, .
    \end{equation*}
    For the $j$th product space set 
    \begin{equation*}
        b_{j}=\tfrac{1}{n} \, , \, \text{ and } \, t_{j}=\tau_{\psi_j}+\tau_{\phi_j}-\tfrac{1}{n}\geq 0 \, .
    \end{equation*}
    Note that our condition $\sum_{i}b_{i}=1$ is satisfied so by Minkowski's Theorem for systems of linear forms we have \eqref{minkowksi full measure consequence} and so \eqref{full measure rectangle} is satisfied. Taking the sequence of $n$-tuples
\begin{equation}\label{gamma sequence}
    \gamma_{j}(q)=\tfrac{(2-\phi_{j}(q))\psi_{j}(q)}{2q}<\frac{\psi_{j}(q)}{q}, \quad \text{ and } \quad \gamma_{i}(q)=0 \, \quad i \in \{1,\dots,n\}\backslash\{j\}
\end{equation}
we clearly have that 
\begin{equation*}
\lim_{q\to \infty}\frac{|\gamma_{j}(q)|}{q^{-1-\tfrac{1}{n}}}< \infty\, ,
\end{equation*}
and so condition~\ref{shift rectangle} is satisfied. Thus Theorem~\ref{shifted mtprr} is applicable. Directly substituting our values for $a_{i}$ and $t_{i}$ into the formula gives us the lower bound Hausdorff dimension when $j$ is fixed. That is, we have
\begin{equation*}
    \dimh \widetilde{W}_{n}^{j}(\Psi,\Phi) \geq \min_{k(j)=1,\dots,n} \left\{ \frac{n+1+\sum_{i\neq j : \tau_{\psi_i}<\tau_{k(j)}}(\tau_{k(j)}-\tau_{\psi_i})+\max\{0,\tau_{k(j)}-\tau_{\psi_j}-\tau_{\phi_j}\}}{1+\tau_{k(j)}} \right\}\, .
\end{equation*}
We should remark at this stage that the case of $\tau_{\psi_{i}}\geq \tfrac{1}{n}$ means we can use a `balls to rectangles' mass transference principle. So in particular the dimension result appearing here also follows from \cite[Theorem 3.1]{KR21}. This is not true in the following case.

\item There exists $1\leq k \leq n$ such that $\tau_{\psi_k}<\tfrac{1}{n}$: Let $1\leq \ell \leq n-1$ be the largest integer such that 
\begin{equation*}
    \tau_{\psi_\ell}>\frac{1-\sum_{i=\ell+1}^{n}\tau_{\psi_i}}{\ell}\, .
\end{equation*}
Set
\begin{equation*}
    b_{i}=\tau_{\psi_i} \quad \text{ for } i=\ell+1 ,\dots, n\, , \text{ and } b_{i}=\frac{1-\sum_{i=\ell+1}^{n}\tau_{\psi_i}}{\ell} \quad \text{ for } i=1,\dots,\ell \, ,
\end{equation*}
and let $t_{i}=\tau_{\psi_{i}}-b_{i}\geq 0$ for $i\in \{1,\dots,n\}\backslash \{j\}$ and $t_{j}=\tau_{\psi_j}+\tau_{\phi_j}-b_{j}\geq 0$ .
Notice that again \eqref{minkowksi full measure consequence} is satisfied, and so \eqref{full measure rectangle} is satisfied. Take the same sequence of $n$-tuples as \eqref{gamma sequence} and note that, independent of whether $j\leq \ell$ or $j>\ell$, we have that
\begin{equation*}
    \lim_{q\to \infty}\frac{|\gamma_{j}(q)|}{q^{-1-b_j}}< \infty \, ,
\end{equation*}
so \eqref{shift rectangle} is satisfied. Applying Theorem~\ref{shifted mtprr} and substituting in our choices of $a_{i}$ and $t_{i}$ we obtain the same conclusion as in the previous case, that is 
\begin{equation*}
    \dimh \widetilde{W}_{n}^{j}(\Psi,\Phi) \geq \min_{k(j)=1,\dots,n} \left\{ \frac{n+1+\sum_{i\neq j : \tau_{\psi_i}<\tau_{k(j)}}(\tau_{k(j)}-\tau_{\psi_i})+\max\{0,\tau_{k(j)}-\tau_{\psi_j}-\tau_{\phi_j}\}}{1+\tau_{k(j)}} \right\}\, .
\end{equation*}
\end{itemize}

To complete the lower bound proof notice we can apply the same method to each set $\widetilde{W}_{n}^{j}(\Psi,\Phi)$ for $j=1,\dots, n$ to obtain a lower bound Hausdorff dimension result. Taking the maximum dimension over these gives us the lower bound result thus completing the proof.

\noindent{\bf Acknowledgments.} This research is supported by the Australian
Research Council Discovery Project (200100994).


\begin{thebibliography}{10}

\bibitem{BS24}
P.~Bandi and N.~de~Saxc{\'e}.
\newblock Hausdorff dimension and exact approximation order in $\mathbb{R}^n$.
\newblock {\em Preprint arXiv:2312.10255}, 2023.

\bibitem{BDGW23}
V.~Beresnevich, S.~Datta, A.~Ghosh, and B.~Ward.
\newblock Bad is null.
\newblock {\em Preprint arXiv:2307.10109}, 2023.

\bibitem{BDV01}
V.~Beresnevich, D.~Dickinson, and S.~Velani.
\newblock Sets of exact `logarithmic' order in the theory of {D}iophantine
  approximation.
\newblock {\em Math. Ann.}, 321(2):253--273, 2001.

\bibitem{BV06}
V.~Beresnevich and S.~Velani.
\newblock A mass transference principle and the {D}uffin-{S}chaeffer conjecture
  for {H}ausdorff measures.
\newblock {\em Ann. of Math. (2)}, 164(3):971--992, 2006.

\bibitem{B34}
A.~Besicovitch.
\newblock Sets of {F}ractional {D}imensions ({IV}): {O}n {R}ational
  {A}pproximation to {R}eal {N}umbers.
\newblock {\em J. London Math. Soc.}, 9(2):126--131, 1934.

\bibitem{Bu03}
Y.~Bugeaud.
\newblock Sets of exact approximation order by rational numbers.
\newblock {\em Math. Ann.}, 327(1):171--190, 2003.

\bibitem{Bu08}
Y.~Bugeaud.
\newblock Sets of exact approximation order by rational numbers. {II}.
\newblock {\em Unif. Distrib. Theory}, 3(2):9--20, 2008.

\bibitem{BM11}
Y.~Bugeaud and C.~Moreira.
\newblock Sets of exact approximation order by rational numbers {III}.
\newblock {\em Acta Arith.}, 146(2):177--193, 2011.

\bibitem{C50}
JWS Cassels.
\newblock Some metrical theorems in {D}iophantine approximation. {I}.
\newblock {\em Proc. Cambridge Philos. Soc.}, 46:209--218, 1950.

\bibitem{F14}
K.~Falconer.
\newblock {\em Fractal geometry}.
\newblock John Wiley \& Sons, Ltd., Chichester, third edition, 2014.
\newblock Math. foundations and applications.

\bibitem{Fraser23}
R.~Fraser and R.~Wheeler.
\newblock Fourier dimension estimates for sets of exact approximation order:
  the well-approximable case.
\newblock {\em Int. Math. Res. Not.}, pages 20943--20969, 2023.

\bibitem{Fregoli2023}
R.~Fregoli.
\newblock A remark on the set of exactly approximable vectors in the
  simultaneous case.
\newblock {\em Proc. Amer. Math. Soc.}, electronically published on June 14th,
  2024.

\bibitem{J28}
V.~Jarn\'ik.
\newblock Zur metrischen theorie der diophantischen approximationen.
\newblock {\em Prace mar. fiz.}, pages 91--106, 1928.

\bibitem{K24}
A~Khintchine.
\newblock Einige {S}\"{a}tze \"{u}ber {K}ettenbr\"{u}che, mit {A}nwendungen auf
  die {T}heorie der {D}iophantischen {A}pproximationen.
\newblock {\em Math. Ann.}, 92(1-2):115--125, 1924.

\bibitem{KLWZ}
H.~Koivusalo, J.~Levesley, B.~Ward, and X.~Zhang.
\newblock The {D}imension of the set of $\psi$-{B}adly approximable points in
  all ambient dimensions: {O}n a question of {B}eresnevich and {V}elani.
\newblock {\em Int. Math. Res. Not.}, electronically published on 21st May
  2024.

\bibitem{KR21}
H.~Koivusalo and M.~Rams.
\newblock Mass transference principle: from balls to arbitrary shapes.
\newblock {\em Int. Math. Res. Not. IMRN}, pages 6315--6330, 2021.

\bibitem{KM20}
D~Koukoulopoulos and J~Maynard.
\newblock On the {D}uffin-{S}chaeffer conjecture.
\newblock {\em Ann. of Math. (2)}, 192(1):251--307, 2020.

\bibitem{R98}
B~Rynne.
\newblock Hausdorff dimension and generalized simultaneous {D}iophantine
  approximation.
\newblock {\em Bull. London Math. Soc.}, 30(4):365--376, 1998.

\bibitem{Schl23}
Johannes Schleischitz.
\newblock Metric results on sumsets and {C}artesian products of classes of
  {D}iophantine sets.
\newblock {\em Results Math.}, 78(6):Paper No. 215, 34, 2023.

\bibitem{WW19}
B.~Wang and J.~Wu.
\newblock Mass transference principle from rectangles to rectangles in
  {D}iophantine approximation.
\newblock {\em Math. Ann.}, 381(1-2):243--317, 2021.

\end{thebibliography}
\end{document}